\newtheorem{theorem}{Theorem}
\newtheorem{lemma}[theorem]{Lemma}
\newcommand{\bfm}[1]{\boldsymbol{#1}}
\title{A Neural Network Framework for Discovering Closed-Form Solutions to Quadratic Programs with Linear Constraints
\thanks{A portion of this paper appeared previously in the author’s PhD thesis: Fuat Can Beylunio\u{g}lu, ``Towards Explainable Neural Networks for Mathematical Programming" PhD Dissertation, University of Waterloo, 2025.} 
}
\author{
  Fuat Can Beylunio\u{g}lu, P. Robert Duimering, Mehrdad Pirnia \\
  Management Science and Engineering \\
  University of Waterloo \\
  Waterloo\\
  \texttt{\{fcbeylun,rduimering,mpirnia\}@uwaterloo.ca} \\
}
\newtcolorbox{remarkbox}{colback=black!10!white, colframe=white!50!black, title=Remark}
\begin{document}
\maketitle

\begin{abstract}
Deep neural networks (DNNs) have been used to model complex optimization problems in many applications, yet have difficulty guaranteeing solution optimality and feasibility, despite training on large datasets. Training a NN as a surrogate optimization solver amounts to estimating a global solution function that maps varying problem input parameters to the corresponding optimal solutions. Work in multiparametric programming (mp) has shown that solutions to quadratic programs (QP) are piece-wise linear functions of the parameters, and researchers have suggested leveraging this property to model mp-QP using NN with ReLU activation functions, which also exhibit piecewise linear behaviour. This paper proposes a NN modeling approach and learning algorithm that discovers the exact closed-form solution to QP with linear constraints, by analytically deriving NN model parameters directly from the problem coefficients without training. Whereas generic DNN cannot guarantee accuracy outside the training distribution, the closed-form NN model produces exact solutions for every discovered critical region of the solution function. To evaluate the closed-form NN model, it was applied to DC optimal power flow problems in electricity management. In terms of Karush-Kuhn-Tucker (KKT) optimality and feasibility of solutions, it outperformed a classically trained DNN and was competitive with, or outperformed, a commercial analytic solver (Gurobi) at far less computational cost. For a long-range energy planning problem, it was able to produce optimal and feasible solutions for millions of input parameters within seconds.
\end{abstract}

\keywords{Multiparametric programming\and  Neural Networks,Quadratic Optimization\and  Closed-form Solutions, Uncertainty} 

\maketitle


\section{Introduction}\label{sec:Intro}

Neural Networks (NN) have been used as surrogate optimization models to predict solutions to optimization problems in energy management, chemistry, control theory, and other fields \citep{nellikkath2022physics,karg2020efficient,katz2020integrating}. NNs are attractive as optimization solvers, due to their speed and computational efficiency advantages compared to analytic solvers. Conceptually, the goal of such a model is to estimate a solution function that maps varying input parameters, such as different values of the right hand side (RHS) vector or cost coefficients for a given optimization problem, to the corresponding optimal solutions. Typical applications use a generic deep NN (DNN) trained on datasets of input-output pairs representing particular parameter values and optimal solutions, respectively, to approximate solutions. Such black box models ignore the mathematical structure of the underlying solution function and require large, computationally costly training sets to achieve satisfactory performance.

Mathematical programming has primarily been a study of numeric optimization where the optimal solution is obtained iteratively, such as using Quasi Newton-Raphson approaches and the simplex method. This is mainly due to the fact that the closed-form solution function is often difficult to obtain in constraint optimization. The potential of NN models to predict accurate solutions to optimization problems without computationally costly solver iterations, has significant implications for operations research theory and practice. A NN model trained on pre-solved instances effectively acts as a proxy of a closed-form solution function, i.e., taking input parameters and producing near-optimal solutions. Therefore, it is important to understand (i) the mathematical properties of the solution function and (ii) the capability of NN models to represent this function.

The properties of the solution function have been studied in the multiparametric programming and control literature\footnote{This literature uses the term optimizer to refer to the function mapping parameters to optimal primal solutions. We use the term solution function to refer to the mapping between parameters and both primal and dual solutions.}. For multiparametric linear programs (mp-LP) and quadratic programs (mp-QP), it has been shown that a function, $g$, defined between unknown parameters added to the RHS of constraints, $\bfm \theta$, to primal solutions, $\textbf{x}^*$, i.e., $g: \bfm\theta\rightarrow\textbf{x}^*$, has a piecewise affine, or piecewise linear (PWL) form \citep{pistikopoulos2020multi}. A NN with rectified linear unit (ReLU) activation functions also has the form of a PWL function with trainable weights and biases \citep{montufar2014number}. This common PWL structure raises the theoretical possibility of finding optimal NN weights and biases such that the model provides an exact closed-form representation of the solution function for any mp-LP or mp-QP \citep{karg2020efficient}.


However, training a DNN model that guarantees solution optimality and feasibility is very difficult in practice. One challenge arises from the nonconvex nature of the training loss function, since different combinations of NN weights and biases may yield different local minima for the same dataset. Furthermore, model prediction accuracy depends strongly on the composition of the training set and declines when inputs fall outside the training distribution. To represent the true global solution function, therefore, it would be necessary to train with extremely large datasets, spanning all critical regions of the parameters, which quickly becomes unmanageable as dimensionality increases. For example, in an \( n \)-dimensional space, including just boundary points for every feasible region implies the need for \( 2^n \) samples. Yet, not even this is enough for accurate estimation, since data points inside each region are also required \citep{beylunioglu2025partially}.

To address these challenges, this paper proposes a NN modeling approach for mp-QP with linear constraints and a learning-via-discovery (LvD) algorithm that derives all model parameters directly from the problem coefficients without the need for training. The proposed approach calculates exact model parameters from the problem coefficients via linear algebraic calculations, rather than approximating  parameters by minimizing loss over a training dataset. Thus, model predictions do not depend on dataset sampling and generalize to every feasible input parameter, given that all critical regions in the feasible domain of the solution function are discovered by the LvD algorithm. In other words, our proposed NN model provides an exact representation of the closed-form solution function to mp-QP with linear constraints for all discovered critical regions.


The proposed closed-form NN model is designed to expand as new critical regions of the solution function are discovered, by fixing the first layer to the slopes of each  discovered linear segment and using the next layers to choose the correct slope. The LvD algorithm learns the NN model parameters by incrementally discovering new critical regions, corresponding to different sets of binding constraints, and gradually expands the model to include parameters computed based on the slope of each new region. The algorithm starts at a known feasible input parameter, $\bfm\theta^0$, and the corresponding binding constraints at the solution, $\mathcal{B}_0$. The NN is initialized only with the slope of the linear segment in this critical region, by fixing the first layer to $\textbf{W}^0 = [\nabla\bfm\mu^*_{\mathcal{B}_0}]$. This initial model already includes all the information needed to generate optimal solutions within the initial critical region, so it is used to predict solutions by incrementally adjusting input parameters until a parameter, $\bfm\theta^{j+1}$, is found such that the current state of the model violates KKT optimality conditions, indicating that a new critical region has been discovered. Then, the set of binding constraints in the new region, $\mathcal{B}_{j+1}$, is identified by either adding a new constraint to the current set or deleting one that is no longer binding, and the NN model is expanded by adding the corresponding slope $\nabla \bfm\mu^*_{\mathcal{B}_{j+1}}$ to $\textbf{W}^0$. This procedure is repeated until all critical regions are identified and included in the model.

Our approach differs from the literature and contributes as follows:
\begin{enumerate}
	\item We propose a NN modeling approach and learning-via-discovery algorithm that can produce exact optimal and feasible solutions to mp-QP with uncertain RHS values applied on the equality constraints. To the best of our knowledge, our model is the first to represent closed-form mp-QP solution functions as a NN, to the extent that the discovery step visits all critical regions. Although prior work has recognized functional similarities between ReLU NN models and MP solutions, existing studies use training data to approximate the solution function.
    \item The proposed LvD algorithm is fundamentally different from traditional NN training, with the goal of discovering critical regions rather than estimating model weights from data. The result is an entirely explainable white-box NN model, whose weights correspond directly to parameters of the solution function. Because the initialized model expands as critical regions are discovered, at any learning iteration, the violations made by its current state are used to infer the binding constraints of new regions, without the need for analytic solvers.  
    \item Our model learns a single global solution function that directly represents the true solution mapping for all discovered critical regions. Hence, when predicting new solutions it is agnostic to the region in which input parameters reside. By contrast, traditional multiparametric programming solutions are an ensemble of multiple solution functions, so an extra step of first determining the critical region is needed to apply the mapping that corresponds to the correct active constraint set.
    \item Our model guarantees the optimality and feasibility of solutions for discovered critical regions, with accuracy limited only by the floating point precision of the NN package employed. For example, with 64-bit representation, solutions exceed the accuracy of commercial solvers such as Gurobi.
    

\end{enumerate}

The paper is organized as follows. Section \ref{sec:lit} discusses related literature and Section \ref{sec:mpqp} describes the mp-QP problem, derives abstract solution strategies, and provides proofs on properties of mp-QP solutions needed to support the proposed approach. Section \ref{sec:methodology} describes the model architecture, introduces the LvD algorithm, and explains it using a 2D example. Section \ref{sec:results} compares model precision and speed against alternative approaches using IEEE test sets from the domain of electricity distribution networks. The last section discusses limitations, conclusions, and future work.

\section{Related Work}\label{sec:lit}


\subsection{ML for Optimization}

Machine learning research for optimization falls into two broad categories: using ML to improve traditional optimization methods, and training ML models for end-to-end optimization learning. The first group uses ML for heuristics to improve solution time and efficiency of existing optimization algorithms, such as using NNs to learn branching rules for MILP problems \citep{gupta2020hybrid}, using reinforcement learning to predict optimal cuts for MILP problems  \citep{tang2020reinforcement} or optimal combinations for branching rules \citep{balcan2018learning} for a general class of problems. Other work employs statistical learning approaches as heuristics for combinatorial optimization, such as learning active constraints to reduce problem dimensions by removing inactive constraints \citep{baker2018joint}, or to provide more accurate predictions and quicker solutions \citep{misra2021learning,ng2018statistical, deka2019learning,chen2022learning}. ML also offers computational advantages for column generation as a heuristic model to predict the reduced cost of columns without directly solving the pricing subproblems \citep{shen2022enhancing} or to find paths leading to fewer iterations in the master problem \citep{chi2022deep}.

The use of ML for end-to-end optimization learning leverages DNNs to approximate the complex, nonlinear mapping between inputs and optimal decision variables, with applications in various fields \citep{karg2020efficient,katz2020integrating}. Surrogate NN optimization models promise significant speed advantages over analytical solvers, so they are particularly useful in electricity power management, where complex problems must be solved repeatedly with varying input parameters \citep{nellikkath2022physics,fioretto2020predicting,lotfi2022constraint}. Despite their potential benefits, these models still lack strong guarantees of optimality and feasibility, and require large datasets and substantial computational power for training.



To improve feasibility, some researchers proposed two-stage models that first predict optimal solutions with a neural network, followed by post-processing to ensure feasibility \citep{zamzam2020learning,chen2023end}. Another study used a sigmoid activation function in the NN output layer to impose generator dispatch limits  \citep{lotfi2022constraint}. \citet{fioretto2020predicting} formulated optimal solutions through sequentially connected sub-networks designed to reflect key characteristics of the problem. Other work proposed using physics-informed neural networks (PINNs) trained to satisfy the Karush-Kuhn-Tucker (KKT) optimality conditions, resulting in smaller dataset requirements, higher prediction accuracy with fewer violations, and improved generalizability \citep{nellikkath2022physics}. 

Despite these advances, most approaches still rely on treating the neural network model as a black-box that approximates the functional mapping from inputs to optimal solutions, and thus do not leverage the underlying mathematical structure of the optimization problem. For this reason, these models struggle to generalize beyond their training data and cannot guarantee feasibility or optimality. In the next section, we outline the multiparametric programming literature that investigates the functional relationship between inputs and optimal solutions.

\subsection{Multiparametric Programming}
Multiparametric programming is a subfield of mathematical optimization that aims to characterize how optimal solutions and cost function vary with respect to a set of parameters applied to the problem coefficients. MP addresses the problem as finding the solution function that produces optimal decision variables as explicit functions of these parameters. For LP, MP solutions yield PWL mappings over polyhedral regions in the parameter space. The geometric foundation for mp-LP accounts for critical regions as polytopes, with the optimizer function updates as the parameter vector moves among adjacent regions \citep{borrelli2003geometric}. This geometric methodology extends to mp-QP, where the optimizer remains PWL and the value function becomes piecewise quadratic in the parameters \citep{bemporad2000explicit, pistikopoulos2020multi}. The theory of MP uses the parametric simplex algorithm for mp-LP and the Basic Sensitivity Theorem for mp-QP \citep{fiacco1976sensitivity}, which describes how solutions change when parameters change. MP can be generalized to the mixed-integer setting as unions of multiple mp-LP or mp-QP instances, distinguished by discrete variable assignments (see \cite{oberdieck2016multi} for a review).

The feasible parameter space of an mp-QP can be systematically partitioned into critical regions by enumerating all possible active sets and removing infeasible configurations. However, because the number of active sets grows exponentially with the number of problem dimensions, exhaustive enumeration quickly becomes impractical for larger problems. The process can be made more efficient by identifying infeasible active sets early—since any superset of an infeasible configuration is itself infeasible—leading to the use of branch-and-bound algorithms to help discover admissible critical regions more effectively \citep{gupta2011novel,feller2012combinatorial,feller2013explicit}.

A complementary approach uses a bottom-up strategy for critical region discovery. Here, the process starts by solving the problem for an initial parameter value $\bfm\theta_0$ to identify the active set and deriving the associated parametric solution. The exploration of the parameter space then follows the adjacency graph of the critical regions. Earlier techniques, such as constraint reversal—which involves changing the membership of each constraint in the active set as in \citet{bemporad2000explicit}—face scalability issues due to the rapid growth of artificial cuts and the lack of completeness guarantees. To improve scalability, methods like the variable step size algorithm have been developed to infer neighboring regions by navigating along region facets, finding the center, and moving to adjacent polytopes \citep{baotic2003new}. Nevertheless, this approach has limitations when the facet-to-facet adjacency property does not hold in the feasible parameter set.

The PWL structure of mp-LP and mp-QP solutions exhibits functional similarities with DNNs using ReLU activation functions. This parallel has inspired researchers to use DNNs as surrogate models to represent the solution mappings in such problems. NN models offer distinct advantages for multiparametric optimization, as traditional approaches require explicitly determining the critical region corresponding to a given input parameter $\theta$ in order to apply the correct solution mapping associated with its active set. In large-scale applications, the number of regions can grow exponentially, so this ``point location problem" becomes a bottleneck, causing serious demands on both memory and the computation time needed to generate and store the full solution. Because of these limitations, exact multiparametric approaches are typically restricted to problems of moderate size. By contrast, NN solvers are agnostic to the critical region in which input parameters reside, as the model can theoretically represent the global solution function over all discovered critical regions. NN models offer additional speed advantages thanks to the parallelized nature of the calculations.

\cite{karg2020efficient} trained DNNs to emulate model predictive control (MPC) laws and approximate MPC solutions for time-varying systems. \cite{huo2022integrating} introduced an approach that integrates MPC laws for MILP microgrid control problems using NN models. In both cases, the DNNs were used as black-box approximators rather than deriving explicit solution mappings, and the optimality and feasibility of their models' predictions were not reported. \cite{chen2024physics} proposed a PINN-based approach that trains a NN model to solve equality constrained mp-QP problems using a loss function derived from the problem coefficients to penalize equality constraint violations. The model reduced constraint violations to as small as 1E-07, but overall validation loss did not fall below 1E-03 for a test case with 15 decision variables and two equality and no inequality constraints.  

Other research has used NN modeling to improve analytical mp solvers. \cite{katz2020integrating} employed NN models to approximate nonlinear functions that are difficult to integrate with MILP solvers. They reformulated a DNN as a MILP, enabling the use of MILP solvers to address the original nonlinear problem. \cite{rahal2025decision} introduced a decision rule approach that applies the max (or ReLU) function directly to uncertain parameters to generate new features. These are combined with the original parameters to form a richer decision rule, simplifying the mp-LP model and reducing computational and memory requirements.

\cite{beylunioglu2025partially} investigated difficulties in training NN models to represent the QP solution function exactly and proposed a partially-supervised NN model and training procedure. They derived the first layer weights from the underlying problem algebraically, followed by training on a relatively small dataset to learn final weights and biases. The model outperformed traditional NN approaches, but had certain limitations. Training required a large number of epochs to converge for problems with a large number of variables and inequality constraints. Prior knowledge of critical regions was also needed to initiate the model, which required discovery by solving the problem repeatedly on a large number of input parameters that increased with system size.


To address limitations of prior work, this paper proposes a custom NN model that exactly represents the PWL solution function and a learning algorithm to derive model weights analytically without training, while discovering critical regions within the feasible domain without using solvers. The proposed model provides an exact closed-form representation of the mp-QP solution function over all discovered critical regions. We apply the approach to model systems with up to 63 primal and 71 dual variables for proof of concept. The approach avoids the point-location problem and, due to parallelized NN calculations, offers dramatic computational advantages over analytic solvers without sacrificing solution optimality and feasibility. 



\section{Multiparametric QP with Linear Constraints} \label{sec:mpqp}
Typically, mp-QP problem with linear constraints  involves an unknown term in the RHS of constraints or added to the cost function coefficients. The problem can be defined as
\begin{subequations}
	\begin{align}
		\textbf{Minimize }  &\  z(\bfm\theta) = \textbf{x}^T\textbf{Q}\textbf{x}+(\textbf{C}+\bfm\theta_c)^T\textbf{x}+\textbf{C}_0,\\
		\textbf{s.t. } &\textbf{A}_e\textbf{x}=\textbf{b}_e + \bfm\theta_e, \quad [\lambda]\\
		&\textbf{A}_\mathcal{C}\textbf{x}\leq \textbf{b}_\mathcal{C} + \bfm\theta_\mathcal{C}, \quad [\mu]
	\end{align}
	\label{eq:ineq}
\end{subequations} 
where $\textbf{x} \in \mathbb{R}^{n}, \textbf{C} \in \mathbb{R}^{n}, \textbf{C}_0 \in \mathbb{R}^{n}, \textbf{b}_e\in \mathbb{R}^{m_1}, \textbf{b}_\mathcal{C}\in \mathbb{R}^{m_2}$, $\textbf{Q} \in \mathbb{R}^{n\times n}$ is a positive semidefinite matrix, $\textbf{A}_e\in \mathbb{R}^{m_1\times n}$, $\textbf{A}_\mathcal{C}\in \mathbb{R}^{m_2\times n}$, and $\bfm\theta \in \Theta_f$, where $\Theta_f$ is the set of all uncertain feasible parameters. Therefore, (\ref{eq:ineq}) is parametrized by $\bfm\theta = [\bfm\theta_c,\bfm\theta_e,\bfm\theta_\mathcal{C}]$ where $\bfm\theta_c \in \mathbb{R}^n, \bfm\theta_e\in \mathbb{R}^{m_1}, \bfm\theta_\mathcal{C} \in\mathbb{R}^{m_2}$. Here, the subscript of $\textbf{A}_\mathcal{C}$ and $\textbf{b}_\mathcal{C}$ is the set of all inequality constraint indices, i.e., $\mathcal{C}=\{1,2,\dots,m_2\}$. The Lagrangian function of the above problem can be written as,
\begin{align}
	L(\textbf{x},\boldsymbol{\lambda},\boldsymbol{\mu}) &= \textbf{x}^T\textbf{Q}\textbf{x}+(\textbf{C}+\bfm\theta_c)^T\textbf{x}+\textbf{C}_0 + 
	\boldsymbol{\lambda}^T( \textbf{b}_e+\bfm\theta_e-\textbf{A}_e\textbf{x})+
	\boldsymbol{\mu}^T( \textbf{b}_\mathcal{C}+\bfm\theta_\mathcal{C}-\textbf{A}_\mathcal{C}\textbf{x}),
	\label{eq:lagr}
\end{align}
where $\boldsymbol{\lambda} \in \mathbb{R}^{m_1},\boldsymbol{\mu}\in \mathbb{R}^{m_2}$ are dual variables of equality and inequality constraints, respectively.

In cases where the inequality constraints are not binding at the solution, it is known that $\bfm \mu^*=0$. Then, the solution can be calculated by fixing $\bfm\mu=0$ in (\ref{eq:lagr}), setting the derivatives of the Lagrangian to zero, and solving the resulting linear system of equations. If one or more inequality constraints are binding, then $\bfm\mu^*>0$. In this case, under the assumption that the value of $\bfm\mu^*>0$ is known, the solution can be achieved using the same system of equations. Therefore, given the known value of $\bfm\mu^*$, we can define the following solver function (see Appendix \ref{sec:solution} for derivation).
\begin{align}
	g(\bfm\mu^*,\bfm\theta) = \textbf{J}^{-1}
\begin{bmatrix}
	-\textbf{C}-\bfm\theta_c+\textbf{A}_\mathcal{C}^T\bfm\mu^* \\ -\textbf{b}_e-\bfm\theta_e \qquad 
\end{bmatrix}.
	\label{eq:linearsolver}
\end{align}
Equation \ref{eq:linearsolver} is a function that produces the rest of the optimal primal and dual variables for the problem in (\ref{eq:ineq}a)-(\ref{eq:ineq}c), given that the optimal shadow prices of the inequality constraints, $\bfm\mu^*=\mu(\bfm\theta)$, are known. The next section details the piecewise affine nature of the solutions and shows that it is due to the PWL characteristics of $\mu(\bfm\theta)$.

\subsection{Global Properties of mp-QP Solutions}\label{sec:global}
As seen in (\ref{eq:linearsolver}), when optimal shadow prices $\bfm\mu^*$ are known, the remaining optimal values can be obtained through a linear mapping. However, it can be shown that changes in optimal solutions to the problem in (\ref{eq:ineq}),  $\textbf{x}^*,\bfm\lambda^*,\bfm\mu^*$ with respect to changes in $\bfm\theta$ have PWL characteristics. This implies that shifts in the slope of the solution function stem from the functional form of $\mu(\bfm\theta)$. \citet[p. 50]{pistikopoulos2020multi} proposed the following theorem, which  establishes that the optimal primal solutions have PWL characteristics and that the optimal objective function exhibits piecewise quadratic properties with respect to changes in the parameters, $\bfm\theta$.

\begin{theorem}\label{th:pwa}
	For the mp-QP problem \ref{eq:ineq}, $\Theta_f \subseteq \Theta$ is a convex set, the primal solution function \textbf{x}$(\bfm\theta): \Theta_f \rightarrow \mathbb{R}^n$ is continuous and piecewise linear. Also the optimal objective function $\textbf{z}(\bfm\theta):\Theta_f\rightarrow \mathbb{R}$ is continuous and piecewise quadratic.
\end{theorem}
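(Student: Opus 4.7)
The plan is to establish the three claims (convexity of $\Theta_f$, continuity and piecewise linearity of $\textbf{x}(\bfm\theta)$, continuity and piecewise quadraticity of $\textbf{z}(\bfm\theta)$) in sequence, leveraging the KKT characterization already derived and the affine solver map in (\ref{eq:linearsolver}).

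First I would dispose of convexity of $\Theta_f$ by a direct argument. Given $\bfm\theta^{(1)},\bfm\theta^{(2)}\in\Theta_f$ with feasible primals $\textbf{x}^{(1)},\textbf{x}^{(2)}$, for any $\alpha\in[0,1]$ the point $\textbf{x}_\alpha=\alpha\textbf{x}^{(1)}+(1-\alpha)\textbf{x}^{(2)}$ satisfies both $\textbf{A}_e\textbf{x}_\alpha=\textbf{b}_e+\alpha\bfm\theta_e^{(1)}+(1-\alpha)\bfm\theta_e^{(2)}$ and $\textbf{A}_\mathcal{C}\textbf{x}_\alpha\le\textbf{b}_\mathcal{C}+\alpha\bfm\theta_\mathcal{C}^{(1)}+(1-\alpha)\bfm\theta_\mathcal{C}^{(2)}$ by linearity of the constraints and the sign-preserving convex combination on the right-hand side, so $\alpha\bfm\theta^{(1)}+(1-\alpha)\bfm\theta^{(2)}\in\Theta_f$.

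Next I would partition $\Theta_f$ into critical regions indexed by the active set $\mathcal{B}\subseteq\mathcal{C}$ at the optimum. For a fixed $\mathcal{B}$, the KKT stationarity, primal equality, and active-inequality conditions form a linear system in $(\textbf{x},\bfm\lambda,\bfm\mu_\mathcal{B})$ with $\bfm\mu_{\mathcal{C}\setminus\mathcal{B}}=0$. Applying (\ref{eq:linearsolver}) with the appropriate rows, the optimizer and multipliers are affine maps of $\bfm\theta$ on $\mathcal{B}$; the associated critical region $CR_\mathcal{B}$ is then carved out by the inequalities $\bfm\mu_\mathcal{B}^*(\bfm\theta)\ge 0$ and $\textbf{A}_{\mathcal{C}\setminus\mathcal{B}}\textbf{x}^*(\bfm\theta)\le\textbf{b}_{\mathcal{C}\setminus\mathcal{B}}+\bfm\theta_{\mathcal{C}\setminus\mathcal{B}}$, each of which is affine in $\bfm\theta$, so $CR_\mathcal{B}$ is a polyhedron. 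Since there are at most $2^{m_2}$ active sets, the collection $\{CR_\mathcal{B}\}$ is finite, and since every $\bfm\theta\in\Theta_f$ admits some optimal active set, the regions cover $\Theta_f$. This already yields that $\textbf{x}(\bfm\theta)$ is piecewise linear on $\Theta_f$.

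The main obstacle, and the step I would spend most effort on, is continuity across adjacent critical regions, since different active sets give different affine formulas for $\textbf{x}$. The clean way is to argue at a boundary point $\bar{\bfm\theta}\in CR_{\mathcal{B}_1}\cap CR_{\mathcal{B}_2}$: the optimal primal is unique (or at least well-defined by convexity of the feasible set and convexity of $z$), so both formulas must evaluate to the same $\textbf{x}^*(\bar{\bfm\theta})$; in the positive semidefinite (not strictly convex) case one selects the minimum-norm optimizer and invokes the Basic Sensitivity Theorem of \citet{fiacco1976sensitivity} to preserve the argument. Finally, substituting the piecewise affine $\textbf{x}^*(\bfm\theta)$ back into $z(\bfm\theta)=\textbf{x}^{*T}\textbf{Q}\textbf{x}^*+(\textbf{C}+\bfm\theta_c)^T\textbf{x}^*+\textbf{C}_0$ gives a quadratic form in $\bfm\theta$ on each $CR_\mathcal{B}$, and continuity of $\textbf{x}$ together with continuity of the quadratic plugging-in delivers global continuity of $\textbf{z}(\bfm\theta)$, completing the piecewise quadratic claim.
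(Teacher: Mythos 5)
Your proof is correct in substance and takes a genuinely different route from the paper's for the key continuity step. The paper's proof works locally on two adjacent critical regions whose active sets differ by a single constraint $k$: it writes the solution map of the larger active set explicitly as $f_j(\bfm\theta)=f_i(\bfm\theta)+[\textbf{A}_k^T\mu_k(\bfm\theta),\,0,\,0]^T$ and observes that $\mu_k(\bfm\theta)$ vanishes on the shared facet, so the two affine pieces coincide there. You instead build the global polyhedral partition by active set, get affineness on each piece from the KKT system, and obtain agreement on facets from uniqueness of the optimizer (with a min-norm selection in the merely semidefinite case). The paper's algebraic identity buys slightly more: it gives continuity of the entire primal--dual map $(\textbf{x},\bfm\lambda,\bfm\mu)$, which is what Lemma 1 later leans on, whereas your uniqueness argument directly covers only the primal (dual multipliers need not be unique even when the primal is); on the other hand, your write-up explicitly handles the convexity of $\Theta_f$ and the piecewise-quadratic value function, which the paper's appendix proof omits, and it does not need the ``active sets differ by exactly one constraint'' assumption that the paper takes without loss of generality. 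Both arguments share the same unaddressed regularity caveat: invertibility of the bordered KKT matrix (LICQ plus positive definiteness of $\textbf{Q}$ on the relevant subspace), which your appeal to the Basic Sensitivity Theorem gestures at but does not fully discharge.
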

\textit{Proof:} See Appendix \ref{sec:pwl_proof} for proof. A different proof is given by \citet[p. 50]{pistikopoulos2020multi}.

\begin{lemma}\label{th:PWLmu}
	Consider the mp-QP problem \ref{eq:ineq}. The shadow prices of inequality constraints, $\bfm\mu^*:\Theta_f \rightarrow \mathbb{R}^{m_2}$ are piecewise linear.
\end{lemma}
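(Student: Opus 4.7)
The plan is to leverage Theorem \ref{th:pwa} together with the KKT conditions, proving the claim on each critical region of $\Theta_f$ and then gluing the pieces together via continuity. Concretely, I would partition $\Theta_f$ into critical regions $\Theta_\mathcal{B}\subseteq\Theta_f$ indexed by the active set $\mathcal{B}\subseteq\mathcal{C}$ of inequality constraints, and show that on each such region $\bfm\mu^*(\bfm\theta)$ is the restriction of an affine function of $\bfm\theta$.

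On a fixed critical region $\Theta_\mathcal{B}$, for $i\notin\mathcal{B}$ complementary slackness immediately yields $\mu_i^*(\bfm\theta)=0$, which is trivially linear. For the active components, the stationarity condition of the Lagrangian \ref{eq:lagr} gives
\[
\textbf{A}_\mathcal{B}^T\,\bfm\mu_\mathcal{B}^*(\bfm\theta) \;=\; 2\textbf{Q}\,\textbf{x}^*(\bfm\theta) + \textbf{C} + \bfm\theta_c - \textbf{A}_e^T\,\bfm\lambda^*(\bfm\theta).
\]
By Theorem \ref{th:pwa}, $\textbf{x}^*(\bfm\theta)$ is affine on $\Theta_\mathcal{B}$, and the same linear-system argument used there (or equivalently the inversion in equation \ref{eq:linearsolver}) shows $\bfm\lambda^*(\bfm\theta)$ is affine on $\Theta_\mathcal{B}$ as well. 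Hence the right-hand side above is affine in $\bfm\theta$. Under the standard LICQ assumption on the active constraints, $\textbf{A}_\mathcal{B}^T$ has full column rank, so $\bfm\mu_\mathcal{B}^*(\bfm\theta)$ is uniquely determined as an affine function of $\bfm\theta$ on $\Theta_\mathcal{B}$. An alternative, cleaner route is to assemble the full KKT system on $\Theta_\mathcal{B}$ as a single block-linear equation in $(\textbf{x}^*,\bfm\lambda^*,\bfm\mu_\mathcal{B}^*)$ and invert it directly, which simultaneously yields the affinity of all three blocks.

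For continuity across adjacent critical regions, I would appeal to complementary slackness at the boundary: if $\bfm\theta$ crosses from $\Theta_\mathcal{B}$ into $\Theta_{\mathcal{B}'}$ where some constraint $i\in\mathcal{B}\setminus\mathcal{B}'$ leaves the active set, then $\mu_i^*$ equals zero on $\Theta_{\mathcal{B}'}$ and must also vanish on the shared facet (since the constraint is just barely active there), so the two affine pieces agree on the boundary. The symmetric case of a newly activated constraint is handled identically. Combined with the affine representation on each region, this gives the PWL property globally.

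The main obstacle I expect is degeneracy. If LICQ fails or strict complementarity does not hold at a boundary, the multipliers $\bfm\mu^*$ need not be unique and the above KKT system is no longer square-invertible. Handling this rigorously requires either invoking the Basic Sensitivity Theorem of \citet{fiacco1976sensitivity} under suitable nondegeneracy assumptions, or selecting a consistent branch of multipliers (for instance the minimum-norm dual solution) to ensure a single-valued continuous selection. A secondary technicality is verifying that the critical-region decomposition is locally finite and that adjacent regions share full-dimensional facets, so that the piecewise linear representation is well-posed over all of $\Theta_f$.
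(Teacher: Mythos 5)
Your proof is correct in substance but takes a genuinely different route from the paper's. The paper argues \emph{backwards} from Theorem \ref{th:pwa}: since the solver map $g$ in (\ref{eq:linearsolver}) is affine in $(\bfm\mu^*,\bfm\theta)$ and the composite $(g\circ\mu)(\bfm\theta)$ produces the piecewise-affine output $\textbf{x}(\bfm\theta)$, the inner function $\mu(\bfm\theta)$ must itself be piecewise affine. That argument is only two sentences, but it silently requires that $g$ be injective with respect to its $\bfm\mu^*$ argument (i.e., that $\textbf{A}_\mathcal{C}^T$ acting on the relevant multipliers have trivial kernel) — otherwise a non-PWL $\mu$ could still yield a PWL composite. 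Your \emph{forward} construction — complementary slackness for the inactive components, the stationarity equation $\textbf{A}_\mathcal{B}^T\bfm\mu_\mathcal{B}^* = 2\textbf{Q}\textbf{x}^* + \textbf{C} + \bfm\theta_c - \textbf{A}_e^T\bfm\lambda^*$ plus LICQ for the active ones, and the vanishing of $\mu_i^*$ on shared facets for continuity — makes that rank condition explicit and is essentially the same block-KKT machinery the paper itself uses in Appendix \ref{sec:pwl_proof} to prove Theorem \ref{th:pwa} and in (\ref{eq:linearinverse_ineq}) to compute the slopes $\nabla\bfm\mu^*_{\mathcal{B}_i}$. What the paper's route buys is brevity by reusing Theorem \ref{th:pwa} as a black box; what yours buys is an explicit affine formula for $\bfm\mu^*$ on each critical region (which is what the method actually needs) and an honest accounting of where degeneracy or failure of strict complementarity would break uniqueness — the same caveats the paper defers to the hypotheses of Theorem \ref{th:transition}. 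One small point of care: when you invoke Theorem \ref{th:pwa} to say $\textbf{x}^*$ is affine \emph{on each critical region}, you are implicitly identifying the linear pieces of the PWL primal map with the critical regions; that identification is established in the paper's proof of Theorem \ref{th:pwa}, so your argument closes, but it is worth stating rather than assuming.
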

\textit{Proof:} The mapping $g(\cdot)$ in (\ref{eq:linearsolver}) is an affine mapping from $\bfm\mu^*=\mu(\bfm\theta)$, and as shown in Theorem \ref{th:pwa}, the output of $\textbf{x}(\bfm\theta)$ is a piecewise affine function of $\bfm\theta$. As $g(\cdot)$ can be defined as a composite function, i.e., $(g\circ \mu)(\bfm\theta)$, the only input of the function has to be piecewise affine.

Theorem \ref{th:pwa} and Lemma \ref{th:PWLmu} demonstrate that the piecewise linearity of the primal and dual solutions is mainly due to the PWL characteristics of $\bfm\mu^*=\mu(\bfm\theta)$, in contrast to the rest of the solutions that can be obtained with a linear mapping. As is evident from (\ref{eq:derivativeswithmu}), each $\bfm{\mathcal{B}}_i$ defines a feasible subset of $\bfm \theta$, called a \textit{critical region}, where solutions for all $\bfm\theta$ from the same critical region can be obtained by solving the same system of equations since the set of binding constraints does not change at the solution. When moving away from this interval, the system of equations (thus the slope of $\mu(\bfm\theta)$) updates with a different $\mathcal{B}$, resulting in piecewise linear form.

Figure \ref{fig:mu} uses a quadratic optimization problem with $x\in\mathbb{R}^8, \bfm\lambda\in\mathbb{R}^6,\bfm\mu \in \mathbb{R}^6$, to illustrate changes in the slope of $\mu(\bfm\theta)$ with respect to each critical region. The problem was solved repeatedly by varying the RHS of the equality constraints, increasing $\theta_0$ from 0 to the maximum feasible value with other parameters set to 0. It can be seen that there are 5 critical regions determined by which constraint is active at the solution (i.e., $\mu_i^*>0$), and the slope of $\mu(\bfm\theta)$ is different in each region. 

\begin{figure}
	\centering
	\includegraphics[width=.8\linewidth]{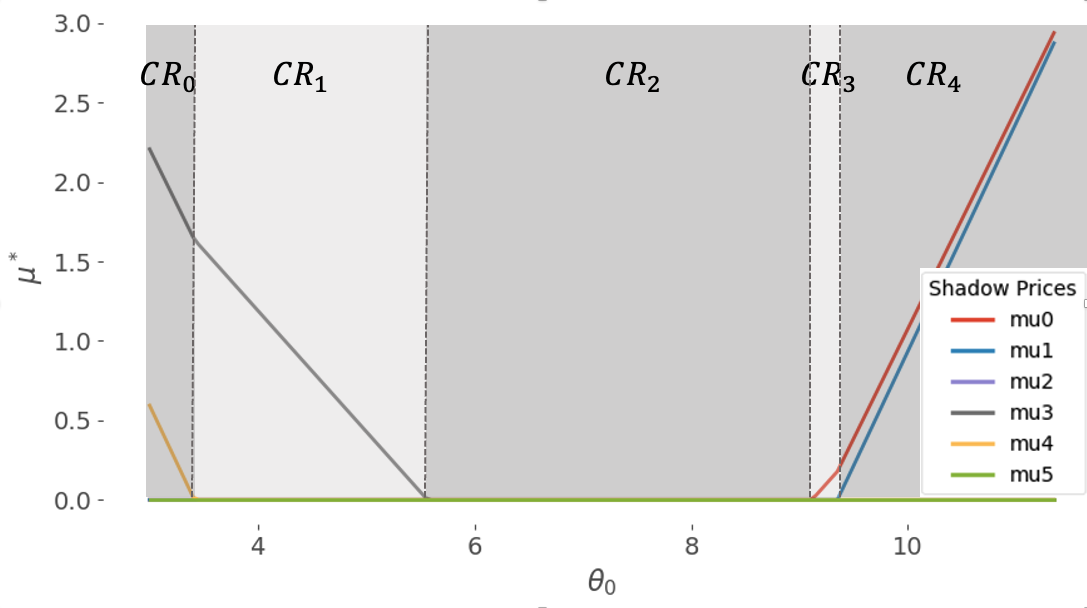}
	\caption{Sensitivity of $\boldsymbol{\mu}$ with respect to $ 
		\theta_0$}
	\label{fig:mu}
\end{figure}

		

Note that as $\bfm\theta$ passes from one critical region to another, either the shadow price of a constraint changes from 0 to positive or vice versa. In other words, at the intersection of two critical regions, the set of binding constraints either expands with a new constraint, or one of the binding constraints no longer binds. Moreover, at the intersection of the two critical regions, adjacent line segments produce the same value of the shadow price, an important property of mp-QP leveraged by the method proposed below.   \citet[p. 52]{pistikopoulos2020multi} proposed the following theorem, which shows that either one of the two cases occurs when transitioning between critical regions.

\begin{theorem}\label{th:transition}
	Consider an active set $\mathcal{B}_i  = \{k_1,k_2,\dots, k_{m}\}$ and its corresponding critical region $CR_i$ in minimal representation, i.e. with all redundant constraints removed. Also, let $CR_j$ be a full-dimensional adjacent critical region to $CR_i$ and assume independent constraint qualification holds for facet $F = CR_i\cap CR_j$, where $H$ is the separating hyperplane. Let $\mu(\bfm\theta)$ be the dual variables of the inequality constraints. Moreover, assume that there are no constraints, which are weakly active at the optimal solution $\textbf{x}^*=x(\bfm\theta)$ at $\forall \bfm\theta \in CR_i$. Then one of the following holds:
	\begin{itemize}
		\item Expanding: If H is given by $\textbf{A}_{m+1} x(\bfm\theta) = \textbf{b}_{k_{m+1}} + \bfm{\theta}_{m+1}$, where $\textbf{A}_{m+1},\textbf{b}_{m+1}$ are coefficients of the $k_{m+1}$th inequality constraint, then the optimal active set in $CR_j$ is $\{k_1,k_2,\dots,k_{m+1}\}$
		
		\item Diminishing: If $H$ is given by the shadow price of the $k_{m}$th constraint, $\mu_{k_m}(\bfm\theta)=0$, then the optimal active set in $CR_j$ is $\mathcal{B}_j = \{k_1,k_2,\dots,k_{m-1}\}$.
	\end{itemize}
\end{theorem}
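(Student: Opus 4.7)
The plan is to characterize each critical region as the polyhedron of parameters for which a fixed active set yields a KKT-valid solution, and then read off what can happen across a facet. First I would note that once $\mathcal{B}_i = \{k_1,\dots,k_m\}$ is fixed, the primal and dual variables of (\ref{eq:ineq}) are uniquely determined as affine functions of $\bfm\theta$ via the linear system (\ref{eq:linearsolver}) augmented with the binding-constraint equalities. Denoting these local affine maps $x_i(\bfm\theta)$ and $\mu_i(\bfm\theta)$, a parameter $\bfm\theta$ belongs to $CR_i$ if and only if (a) primal feasibility holds for the non-active inequalities, $\textbf{A}_k x_i(\bfm\theta) \leq \textbf{b}_k + \bfm\theta_k$ for $k \notin \mathcal{B}_i$, and (b) dual feasibility holds for the active ones, $(\mu_i(\bfm\theta))_{k_\ell} \geq 0$ for $k_\ell \in \mathcal{B}_i$. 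Both (a) and (b) are linear inequalities in $\bfm\theta$, so $CR_i$ is a polyhedron.

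Next I would exploit the minimal-representation hypothesis together with the independent constraint qualification: every facet of $CR_i$ corresponds to exactly one of the inequalities (a) or (b) becoming tight, and the no-weakly-active assumption rules out two of them becoming tight simultaneously on a full-dimensional portion of the boundary. Hence on $F = CR_i \cap CR_j$ exactly one defining inequality of $CR_i$ holds with equality. This yields the claimed dichotomy: either the tight inequality is of type (a), namely $\textbf{A}_{k_{m+1}} x_i(\bfm\theta) = \textbf{b}_{k_{m+1}} + \bfm\theta_{k_{m+1}}$ so that constraint $k_{m+1}$ joins the active set (expanding case), or it is of type (b), $(\mu_i(\bfm\theta))_{k_m}=0$ so that constraint $k_m$ leaves the active set (diminishing case).

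Finally I would verify that the candidate active set in $CR_j$ really does produce the KKT-optimal solution on a full-dimensional neighborhood. On $F$, the $\mathcal{B}_i$-solution and the candidate $\mathcal{B}_j$-solution coincide, by continuity of $x(\bfm\theta)$ (Theorem \ref{th:pwa}) and, in the diminishing case, by $\mu_{k_m}=0$ being consistent with dropping $k_m$ from the active set. In a small open neighborhood of the relative interior of $F$ on the $CR_j$ side, the candidate solution's dual variables for $\mathcal{B}_j$ remain nonnegative and its primal values remain strictly feasible for all non-flipping inequalities, so the KKT conditions hold and this candidate is indeed optimal in $CR_j$.

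The main obstacle will be the uniqueness step: ruling out that some other inequality becomes tight simultaneously with the facet-defining one, which would permit an altogether different active set in $CR_j$. This is exactly what the independent constraint qualification on $F$ and the absence of weakly active constraints in $CR_i$ are designed to exclude, but making that exclusion rigorous requires checking that the affine system determining $x_i$ and $\mu_i$ retains full rank under the perturbed active set, so that each defining inequality of $CR_i$ that is strict on $F$ remains strict on a neighborhood of $F$ in $CR_j$.
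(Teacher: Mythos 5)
The paper does not supply its own argument for this theorem; it defers entirely to the cited reference (Pistikopoulos et al., p.~52), so there is no in-paper proof to compare against line by line. Your sketch reconstructs what is essentially the standard textbook argument: represent $CR_i$ as the polyhedron in $\bfm\theta$ cut out by (a) primal feasibility of the inactive inequalities evaluated at the affine map $x_i(\bfm\theta)$ and (b) nonnegativity of the multipliers $\mu_i(\bfm\theta)$ of the active ones, observe that in minimal representation each facet corresponds to exactly one of these inequalities becoming tight, and read off the expanding/diminishing dichotomy from whether the tight inequality is of type (a) or type (b). That is the right skeleton and it matches the structure of the proof in the cited source. Two places deserve more than assertion. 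First, your opening claim that $x_i(\bfm\theta)$ and $\mu_i(\bfm\theta)$ are \emph{uniquely} determined affine functions requires invertibility of the bordered KKT matrix $\textbf{J}_{\mathcal{B}_i}$, which needs LICQ for the active rows \emph{and} positive definiteness of $\textbf{Q}$ on the null space of the active constraints; the paper only assumes $\textbf{Q}$ positive semidefinite, so this hypothesis should be made explicit rather than inherited silently. Second, and more substantively, in the final verification step you assert that on the $CR_j$ side the candidate active set's multipliers ``remain nonnegative,'' but the sign of the \emph{new} multiplier $\mu_{k_{m+1}}$ just past $F$ is exactly the crux: it vanishes on $F$, and one must argue it becomes nonnegative (not negative) in the interior of $CR_j$. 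The standard way to close this is by elimination --- retaining $\mathcal{B}_i$ across $F$ violates primal feasibility of constraint $k_{m+1}$, every feasible $\bfm\theta$ must admit some optimal active set, and continuity plus the single-tight-inequality property restricts the candidates to $\mathcal{B}_i$ and $\mathcal{B}_i\cup\{k_{m+1}\}$ --- with the symmetric argument (dual infeasibility of the retained set) in the diminishing case. You name this as ``the main obstacle'' but do not carry it out, so as written the proposal is a correct and well-organized sketch rather than a complete proof.
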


\begin{proof}
	\ See \citet[p. 52]{pistikopoulos2020multi} for the proof.
\end{proof}

\subsection{Deriving Slopes of Shadow Price Function} \label{sec:muSlopes}
While there is no straightforward approach to formulate $\mu(\bfm\theta)$ analytically, the linear segment of this function corresponding with each critical region, $CR_i$, can be derived from the problem coefficients if the corresponding set of binding constraints, $\mathcal{B}_i$, is known. To find the slope of each linear segment of the function  with respect to the changes in $\bfm \theta$ for binding constraints, $\nabla \boldsymbol{\mu}^*_{\mathcal{B}_i}$, coefficients of binding inequality constraints, $\textbf{A}_{\mathcal{B}_i}$ is appended below the \textbf{J} matrix defined in (\ref{eq:linearinverse}) and then the resulting matrix is inverted.
\begin{align}
	\begin{bmatrix}
		\textbf{x}^*\\ \boldsymbol{\lambda}^* \\ \boldsymbol{\mu}^*_{\mathcal{B}_i}
	\end{bmatrix} 
	&=
	\begin{bmatrix}
		\nabla \textbf{x}^* \\ \nabla \boldsymbol{\lambda}^* \\ \nabla \boldsymbol{\mu}^*_{\mathcal{B}_i}
	\end{bmatrix}
	\begin{bmatrix}
		-\textbf{C}-\bfm\theta_c \\ -\textbf{b}_e -\bfm\theta_e \\ -\textbf{b}_{\mathcal{B}_i}-\bfm\theta_{\mathcal{B}_i}.
	\end{bmatrix}, 
	\label{eq:linearinverse_ineq}
\end{align}	
where $[\nabla \textbf{x}^*,  \nabla \boldsymbol{\lambda}^*,   \nabla \boldsymbol{\mu}^*_{\mathcal{B}_i}]^T = \textbf{J}_{\mathcal{B}_i}^{-1}$ and
\begin{align}
	\textbf{J}_{\mathcal{B}} = 
	\begin{bmatrix}
		2\textbf{Q} & -\textbf{A}_e^T & -\textbf{A}_{\mathcal{B}_i}^T\\
		-\textbf{A}_e & 0 & 0\\
		-\textbf{A}_{\mathcal{B}_i} & 0 & 0
	\end{bmatrix}.
\end{align}
The vector $\nabla \boldsymbol{\mu}^*_{\mathcal{B}_i}$, which consists of the slope parameters of the linear segment of the PWL solution function when the constraint set $\mathcal{B}_i$ binds, is obtained from the last row of the inverted Jacobian matrix, $\mathbf{J}^{-1}_{\mathcal{B}_i}$. That is, this vector is the set of coefficients for the mapping between $\mathbf{B} + \boldsymbol{\theta}$ and $\boldsymbol{\mu}^*_{\mathcal{B}_i}$, where
\begin{align}
	\textbf{B} + \bfm\theta= [\textbf{C} \ \ \textbf{b}_e \ \ \textbf{b}_\mathcal{C}]^T + [\bfm\theta_c \ \ \bfm\theta_e\ \ \bfm\theta_\mathcal{C}]^T.
\end{align}
Next, we propose a custom NN model and an analytical method to derive the model parameters from the optimization problem.


	\section{Methodology} \label{sec:methodology}
	
	\subsection{The Model}
	
	Our QP solver model implements the calculations described in Section \ref{sec:mpqp} in one NN flow, by calculating shadow prices of the inequality constraints and the rest of the solutions through separate parts of the model. As illustrated in Figure \ref{fig:model}, the model is a mapping from linear cost coefficients and RHS of all constraints (binding and nonbinding) to optimal primal and dual solutions, $f: \textbf{B}+\bfm\theta \rightarrow [\textbf{x}^*,\bfm\lambda^*,\bfm\mu^*]$. It consists of two subnetworks: 
    
     \textbf{Shadow Price Model, $f_\mu(\cdot)$}: The first subnetwork, $f_\mu(\cdot)$ is a three-layer DNN model with a custom architecture that  estimates $\mu(\bfm\theta)$, whose PWL properties are discussed in Section \ref{sec:global}. This model is detailed in Section \ref{sec:shadowPriceModel}.
    
    \textbf{Solution Model, $f_g(\bfm\mu^*,\bfm\theta)$}: The second subnetwork, $f_g(\bfm\mu^*,\bfm\theta)$, is a linear model in the form of $f_g(\textbf{z}) = \textbf{W}\textbf{z}+\textbf{b}$ that estimates $g(\bfm\mu^*,\bfm\theta)$ in (\ref{eq:linearsolver}). Note that the parameters of $f_g(\bfm\mu^*,\bfm\theta)$ are already derived in  (\ref{eq:linearsolver}), where $\textbf{W} = \textbf{J}^{-1}$ and $\textbf{b}=0$. 
       
    	While parameters of $f_g(\bfm\mu^*,\bfm\theta)$ are easy to derive, there is no straightforward method to derive the parameters of $f_\mu(\cdot)$. Our methodology is built upon on the observations in Section \ref{sec:global} that within each critical region, $CR_i$, the change in shadow prices with respect to the parameters, $\mu(\bfm\theta)$ for $\bfm\theta \in CR_i$, is a separate linear function whose parameters can be derived from the problem coefficients using (\ref{eq:linearinverse_ineq}). The next section describes how one can combine these separate functions to form a PWL NN model. Then, in Section \ref{sec:shadowPriceModel}, we propose a NN model that forms this PWL function by choosing the correct $\bfm\mu_i^*(\bfm \theta)$ for any $\bfm\theta$. 
	
	\begin{figure}[!h]
		\centering
		\includegraphics[scale=.25]{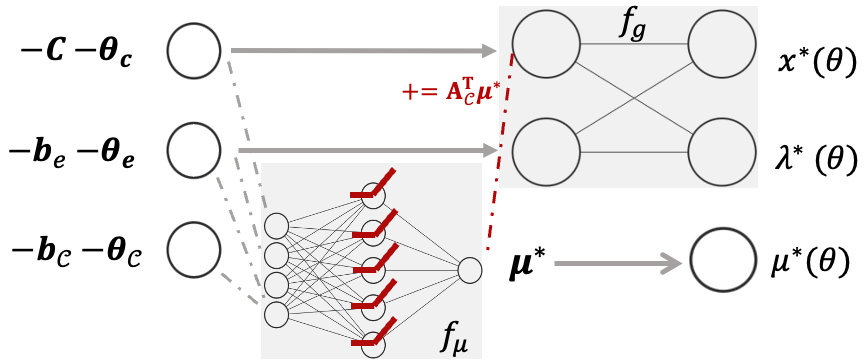}
		\caption{The QP Solver Model Architecture}
		\label{fig:model}
	\end{figure}
	
	\subsection{Modeling PWL Changes}\label{sec:PWLchanges}
	
	\begin{figure}[!h] 
		\centering
		\begin{minipage}[t]{.45\linewidth}
			\centering
			\includegraphics[width=1.2\linewidth]{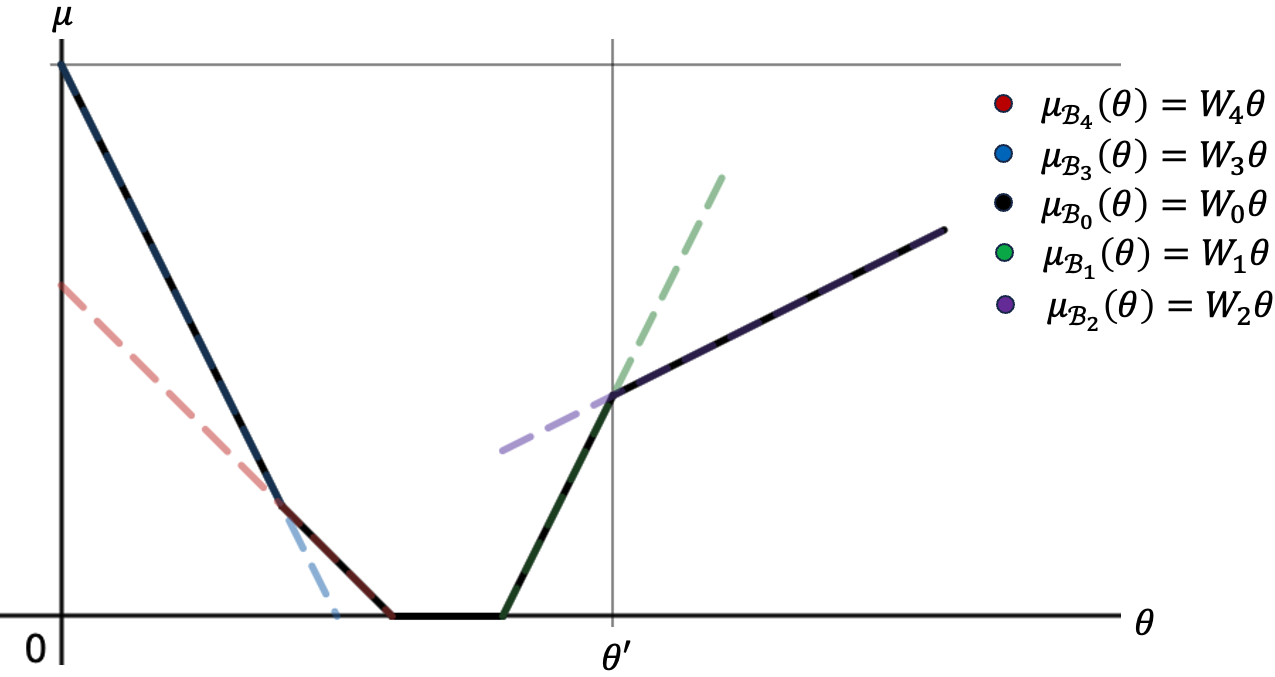}
		\end{minipage}\hfill
		\begin{minipage}[t]{.45\linewidth}
			\centering
			\includegraphics[width=.6\linewidth]{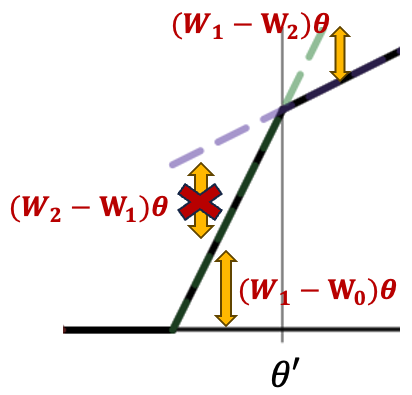}
		\end{minipage}
		\caption{Illustration of slope update rules on an example PWL function.}
		\label{fig:mu_example}
	\end{figure}
	
	The proposed model, $f_\mu$, calculates the shadow prices in multiple layers, by first calculating an array of candidate shadow prices, $[\bfm\mu^*_{\mathcal{B}_1},\dots,\bfm\mu^*_{\mathcal{B}_k}]$, in the first layer and then selecting the correct one, $\bfm\mu^*$ from this array in later layers. To show the motivation behind the architecture, consider the 2-dimensional PWL function on the left in Figure \ref{fig:mu_example}, consisting of five linear segments, each expressed as a linear function, $\mu_{\mathcal{B}_i}(\theta) = \textbf{W}_i\bfm \theta$. Each linear segment is adjacent to one or two other segments, and the two functions are equal at the intersecting points, $\theta''$, between adjacent segments, $i$ and $j$; i.e., $\mu_{\mathcal{B}_i}(\theta') = \mu_{\mathcal{B}_j}(\theta')$ and $\mu_{\mathcal{B}_i}(\theta'') = \mu_{\mathcal{B}_j}(\theta'')$. Note that in the case of a multidimensional PWL function, linear segments are multidimensional, can be adjacent to more than two other segments, and adjacent segments intersect at a multidimensional line instead of a point.
	
	To control the transition from one slope to another, our model starts with a base slope ($\textbf{W}_0$ in the example), and adds the incremental change between two slopes. As shown on the right side of Figure \ref{fig:mu_example}, the slope increases from $W_0$ to $W_1$,  and then decreases from $W_1$ to $W_2$. This function can be represented as
	\begin{align}
		f(\theta) &= [\textbf{W}_0^T\theta]^+ + [(\textbf{W}_1-\textbf{W}_0)^T\theta]^+- [(\textbf{W}_1-\textbf{W}_2)^T\theta]^+,
	\end{align}
	where $[z]^+$ is the ReLU activation function, which returns 0 if $z\leq 0$ and $z$ if $z>0$. Therefore, 
	\begin{align}
		f(\theta) &= 
		\begin{cases}
			\textbf{W}_0^T \theta  & \textbf{ if } \theta<\theta',\\
			\textbf{W}_0^T\theta  +(\textbf{W}_1-\textbf{W}_0)^T\theta   & \textbf{ if } \theta' \leq \theta<\theta'',\notag\\
			\textbf{W}_0^T\theta  +(\textbf{W}_1-\textbf{W}_0)^T \theta  -(\textbf{W}_1-\textbf{W}_2)^T \theta & \textbf{ if } \theta \geq \theta'',\\
		\end{cases} \\&=\begin{cases}
			\textbf{W}_0^T \theta  & \textbf{ if } \theta<\theta',\\
			\textbf{W}_1^T\theta   & \textbf{ if } \theta' \leq \theta<\theta'',\\
			\textbf{W}_2^T\theta   & \textbf{ if } \theta \geq \theta''.
		\end{cases} 
	\end{align}
	In the above example, the '+' or '-' signs between terms and in front of each slope difference depend on whether the slope increases or decreases from one critical region to the next. The rule is as follows: When transitioning from $CR_i$ to $CR_j$, the slope must update from $W_i$ to $W_j$. For $\theta_j$ sampled from the new critical region, $CR_j$, if $\textbf{W}_j\theta_j>\textbf{W}_i\theta_j$, then one can conclude that the slope is updated upwards and downwards otherwise. Therefore, the incremental change in slope is
	\begin{align}
		\Delta f(\theta) &= 
		\begin{cases}
			+(\textbf{W}_j-\textbf{W}_i)\theta   & \textbf{ if } (\textbf{W}_j-\textbf{W}_i)\theta_j>0,\\
			-(\textbf{W}_i-\textbf{W}_j)\theta   & \textbf{ if } (\textbf{W}_j-\textbf{W}_i)\theta_j<0.
		\end{cases} 
	\end{align}

    Using these properties, we propose a NN model to estimate the shadow price function, $\mu(\bfm\theta)$.
    
	\subsection{Shadow Price Model} \label{sec:shadowPriceModel}
	The shadow prices of the inequality constraints are predicted via a deep NN model with three hidden layers, $f_\mu(\cdot)$, as showed in Figure \ref{fig:mu_model}, whose general form is detailed in Appendix \ref{sec:NNs}. Briefly, the first layer weights are fixed to the slopes of linear segments of the shadow price function that are obtained by following the calculations in (\ref{eq:linearinverse_ineq}), and the second and third layer weights are designed to choose the correct slope for a given input. Mathematically, the model can be written as
	\begin{equation}
		f_\mu (\theta) = \textbf{W}^{2T} \sigma(\textbf{W}^{1T}\sigma(\textbf{W}^{0T}(-\textbf{B} - \bfm\theta))).
	\end{equation}
	We assume there are $k$ critical regions in the global solution function and initially assume all regions and corresponding $\mathcal{B}_i$ are known (we later relax this assumption with the introduction of the LvD algorithm). If $\mathcal{B}_i$ are known, then $\bfm\mu^*_{\mathcal{B}_i}$ can be calculated following the operations in (\ref{eq:linearinverse_ineq}). The Shadow Price Model,  $f_\mu(z)$, fixes the first layer weights by stacking all $\bfm\mu^*_{\mathcal{B}_i}$ vertically as
	\begin{equation}
		\textbf{W}^0 = [\nabla\bfm\mu^*_{\mathcal{B}_1},\nabla\bfm\mu^*_{\mathcal{B}_2},\dots,\nabla\bfm\mu^*_{\mathcal{B}_k}]^T,
	\end{equation}
	where $\nabla \bfm\mu^*_{\mathcal{B}_i} \in \mathbb{R}^{m_2\times d}$ and $d=n+m_1+m_2$. Notice that the size of $\nabla \bfm\mu^*_{\mathcal{B}_i}$ is fixed assuming all inequality constraints bind in all cases. On the other hand, the size of $\nabla \bfm\mu^*_{\mathcal{B}_i} \in \mathbb{R}^{r\times c}$ calculated using (\ref{eq:linearinverse_ineq}) changes with the number of binding constraints, i.e.,  where $r\leq m_2$ and $c\leq d$. This is to standardize the input of the NN model and is done by appending vectors of zeros as slopes of nonbinding constraints. In other words, when a constraint $j$ does not bind, the corresponding row of $-\textbf{B}-\bfm\theta$ is multiplied with zero to calculate $\mu_j^*=0$.
	
	Assuming the known values of slopes of $\mu(\bfm\theta)$ in all critical regions, the first layer output will produce all candidate optimal shadow prices as
	\begin{equation}\label{eq:h1}
		\textbf{h}^1 = \textbf{W}^0(-\textbf{B}-\bfm\theta) = [\bfm\mu^*_{\mathcal{B}_1},\bfm\mu^*_{\mathcal{B}_2},\dots,\bfm\mu^*_{\mathcal{B}_k}]^T.
	\end{equation}

    \begin{figure}
		\centering
		\includegraphics[scale=.18]{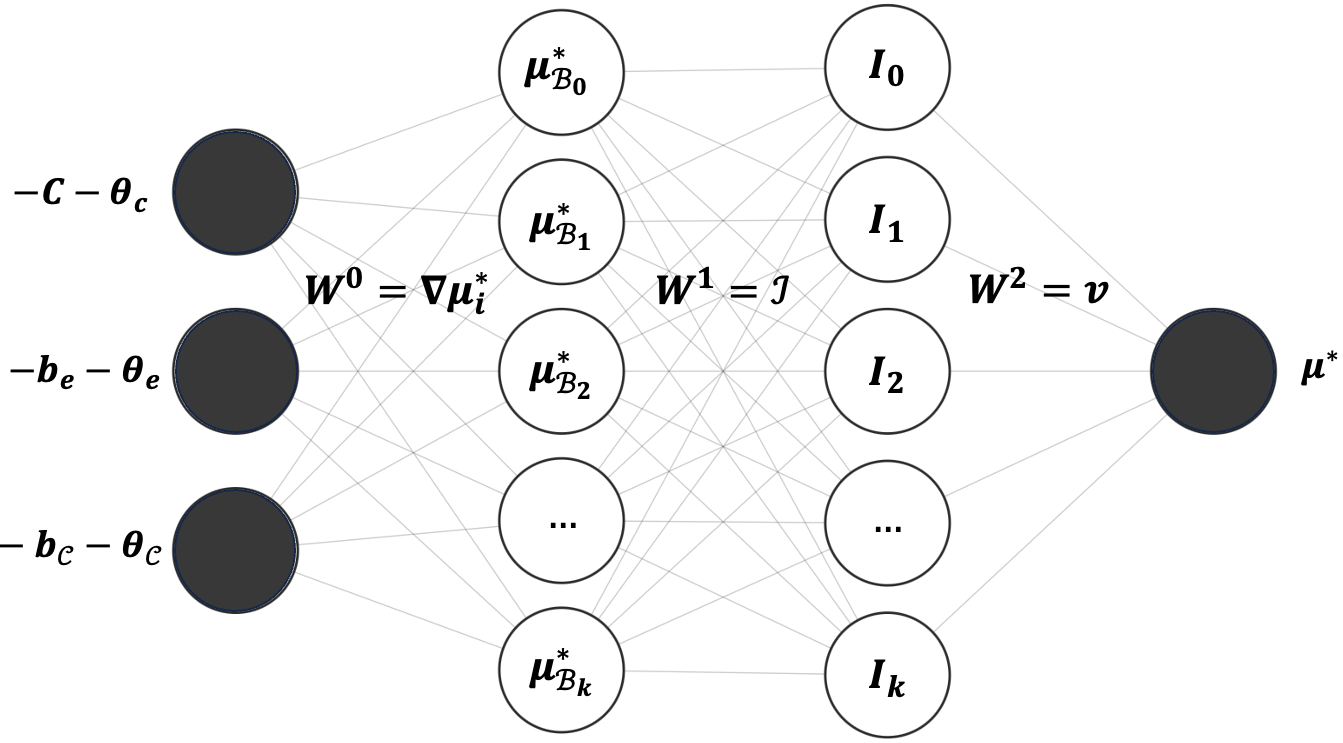}
		\caption{$f_\mu(\bfm\theta)$ model}
		\label{fig:mu_model}
	\end{figure}
	
	\subsubsection{Incidence Matrix and Direction Vector}
    To choose the correct slope in each critical region and form the PWL shadow price function, we linearly combine the first layer outputs by adding one to the other, or subtracting one from another. These operations are handled by two additional NN layers, whose weights are defined as Incidence Matrix, $\mathcal{I} \in \mathbb{R}^{n \times n}$, and Direction Vector, $v\in \mathbb{R}^{n \times 1}$. The former is a sparse matrix that forms the increments from the first layer outputs in (\ref{eq:h1}). For adjacent $(i,j)$,
	\begin{align}
		(\mathcal{I}_{ij}, \mathcal{I}_{jj}) =\begin{cases}
			(-1,1) & \text{ if } (\textbf{W}_j-\textbf{W}_i)\theta_j>0, \\
			(1,-1)  & \text{ if } (\textbf{W}_j-\textbf{W}_i)\theta_j<0,\\
			(0,0)  & otherwise.
		\end{cases} 
	\end{align}
	Similarly, the direction vector can be written as
	\begin{align}
		v = \begin{cases}
			1  & \textbf{ if } (\textbf{W}_j-\textbf{W}_i)\theta_j>0,\\
			-1  & \textbf{ if } (\textbf{W}_j-\textbf{W}_i)\theta_j<0.
		\end{cases} 
	\end{align}

    If the slope increases when moving from $CR_i$ to $CR_j$, the size of the incidence matrix expands with a new row and column and the direction vector expands one dimension. $\mathcal{I}[i,j]$ takes the value of $(-1,1)$ and direction vector expands with $v[j] = 1$ to form $+(\textbf{W}_j-\textbf{W}_i)^+$ function. On the other hand, if the slope decreases, then $\mathcal{I}[i,j]$ takes the value of $(1,-1)$ and direction vector expands with $v[j]=-1$ to form $-(\textbf{W}_i-\textbf{W}_j)^+$ function.

    Using $\mathcal{I}$, $v$, and the $\bfm\mu^*$ prediction, the second and final layer outputs are calculated as
    \begin{subequations}
    \begin{align}
         \textbf{I} &= \sigma(\mathcal{I}^T\textbf{h}^1),\\
         \bfm\mu^* &= v^T\textbf{I}.
    \end{align}
    \end{subequations}
    The second layer, $\textbf{h}^2$, consists of differences between shadow prices for each adjacent critical regions $i$ and $j$, i.e. $\bfm\mu_i^+-\bfm \mu_j^+$. The final layer sums all these activated increments to calculate $\bfm\mu^*$.
    

	\subsection{Learning via Discovery Algorithm}
	\label{sec:algorithm}
	The Shadow Price Model, $f_\mu(\cdot)$, is constructed by using the slopes of the linear segments in each critical region, $\nabla\bfm\mu^*_{\mathcal{B}_i}$, along with the incidence matrix and direction vectors that are calculated using these slopes. The Learning via Discovery  algorithm employs an iterative process to obtain model parameters without the need for training. The algorithm starts at an initial feasible point, $\bfm\theta^0$,  with the solution to the problem at this point, and a search pattern, $\mathcal{P}=[\mathcal{D}_1,\dots\mathcal{D}_n]$, consisting of sets of input parameters, $\mathcal{D}_i = [\bfm\theta_i^0,\bfm\theta_i^1,\dots,\bfm\theta_i^d]$, constructed by incrementally increasing the initial parameters in a specified direction. Using the initial solution, a NN model, $f(\bfm\theta)$, is initialized with the slope of the solution function, $\nabla\bfm\mu^*_0$, in the critical region corresponding to the set of binding constraints, $\mathcal{B}_0$. Initially, the model assumes that there is only one critical region but is already capable of predicting solutions to all parameters within this region, $\bfm\theta \in CR_0$. However, solutions calculated by the model for input parameters in $\mathcal{D}_i$ where $\bfm\theta\not \in CR_0$ will violate KKT optimality conditions. If predictions are found to violate KKT conditions for any input, we can infer that a new critical region has been discovered, given that the input parameters are sampled from the feasible domain. The NN model is then expanded by adding the slope of the new critical region and the process is repeated until all critical regions are discovered. Algorithm \ref{alg:learning} in Appendix \ref{sec:LvDalgorithm} details our approach and an illustration on a 2D example is provided in Appendix \ref{sec:illustration}.
    
	When a new critical region is discovered, the set of binding constraints in this region can be found without solving the problem using the properties described in Theorem \ref{th:transition}. That is, when it is found that the model cannot satisfy KKT conditions, the new set of binding constraints can be found by expanding the set with the violated constraint or dropping the constraint for which $\nabla\bfm\mu^*_{\mathcal{B}_{i-1}} (-\textbf{B}-\bfm\theta_{i}) < 0$. The following remark summarizes the rule to obtain the set of binding constraints in the newly discovered critical region.
    \begin{tcolorbox}[title=Remark 1]
  When $\bfm\theta$ is moved from a critical region to an adjacent one, one of the following can occur: 
\begin{enumerate}
		\item A new inequality constraint, $k$, starts to be violated, i.e., $\textbf{b}_{k} + \bfm\theta_{k} - \textbf{A}_{k}x(\bfm\theta) >0$. Then the set of binding constraints in the new critical region, $\mathcal{B}_{j+1} = \mathcal{B}_{j} \cup \{k\}$.
		\item One of the constraints that was binding in the previous region is no longer binding i.e., $\mu_k^*(\bfm\theta_{j-1})>0 \ \& \ \mu_k^*(\bfm\theta_{j}) = 0$. Then $\mathcal{B}_{j+1} = \mathcal{B}_{j} \setminus \{k\}$.
	\end{enumerate}
\end{tcolorbox}    
	

	\begin{table}[!h]
	\centering
	\begin{tabular}{cccccc}
		KKT1 & KKT2 $(=)$ & KKT2 $(\leq)$ & KKT3 & KKT4 \\
		\midrule
		$\left(\frac{\partial L}{\partial \textbf{x}}\right)^2$ &
		$\left(\frac{\partial L}{\partial \boldsymbol{\lambda}}\right)^2$ &
		$\left[\max\left(0,\frac{\partial L}{\partial \boldsymbol{\mu}}\right)\right]^2$ &
		$\max(0,-\bfm\mu)$ &
		$\left(\boldsymbol{\mu}^*\frac{\partial L}{\partial \boldsymbol{\mu}}\right)^2$
	\end{tabular}
	\caption{KKT Conditions and corresponding formulae}
	\label{tab:kkt}
\end{table}
	To test the KKT optimality of each calculated solution, the algorithm computes a custom $KKT(f(-\textbf{B} - \bfm\theta),\bfm\theta) $ function defined as the average of the five KKT violations formulated in Table \ref{tab:kkt}. Specifically, $KKT(\cdot)$ averages the violations of Stationarity (KKT1), Primal Feasibility of equality constraints (KKT2 $=$), Primal feasibility of inequality constraint (KKT2 $\leq$), Dual Feasibility (KKT3), and Complementary Slackness (KKT4) conditions. The violations in Table \ref{tab:kkt} are vectors, so $KKT(\cdot)$ function first stacks all the vectors, then averages the elements to return a scalar. The function yields 0 for optimal and feasible solutions and nonzero otherwise. 

The outcome of Algorithm \ref{alg:learning} is highly sensitive to the search pattern, $\mathcal{P}$, whether it visits all critical regions, and whether the transitions from one region to another satisfy the assumptions in Theorem \ref{th:transition}. For the numerical results reported in Section \ref{sec:test_results}, we used two different search patterns based on the different kinds of constraints included in the test problems. The problems described in Sections \ref{sec:test_results} and \ref{sec:results_speed} included only box constraints, which resulted in a simple connectivity graph between critical regions that could be discovered using a relatively simple search pattern, as illustrated in Figure \ref{fig:criticalRegions} in Appendix \ref{sec:line-limits}. However, when line limit (i.e., capacity) constraints were included in the problem, incrementally increasing one parameter while fixing the others led to the discovery of different critical regions, resulting in a more complex connectivity graph. Therefore, we updated our search pattern in a more comprehensive algorithm in Appendix \ref{sec:line-limits}.


    

\section{Numerical Results}\label{sec:results}
To obtain proof of concept support for our proposed modeling approach, we applied the approach to the DC optimal power flow problem. The problem is defined as a quadratic program with linear constraints where the objective is to find the generator dispatch that minimizes the cost of generating electricity while satisfying the power flow constraints and generator dispatch limits:
\begin{subequations}
	\begin{align}
		\mathop{\min}_{\textbf{P}_g,\bfm\delta} \ &\textbf{P}^{T}\textbf{Q}\textbf{P} + \textbf{C}^T\textbf{P} + \textbf{C}_0,\\
		\text{s.t.: } & \textbf{P}_d +\bfm\theta_e-\textbf{P} -\textbf{B}\bfm\delta = 0,\\
		&\textbf{P}^-\leq \textbf{P} \leq \textbf{P}^+,
	\end{align}
	\label{eq:DC-OPF}
\end{subequations}
where $\textbf{Q}, \textbf{C}, \textbf{C}_0$ are cost coefficients and $\textbf{B}$ is the susceptance matrix, all of which are constant system parameters. $\textbf{P}\in \mathbb{R}^N$ is the vector of generator dispatches with $P_i = 0 \ \forall i \notin \mathcal{N}_g$ , $\delta_i \in \mathbb{R}^N$ is voltage angles at every bus with $\delta_i=0$ for the slack bus, and $\textbf{P}^d \in\mathbb{R}^N$ is the demand at every bus.

We applied the proposed closed-form (CF) NN approach to discover solution functions for DC-OPF problems on the IEEE-6, -30, and -57 bus test systems, as provided by the Pandapower Python package \citep{thurner2018pandapower}. The 6-bus system comprises 8 primal and 12 dual variables (6 equality and 6 inequality constraints); the 30-bus system has 35 primal and 42 dual variables (30 equality, 12 inequality constraints); and the 57-bus system has 63 primal and 71 dual variables (57 equality, 14 inequality constraints). We compared solution predictions against those of a DNN model trained by standard methods, (i.e., employing input-output pairs that link particular configurations of system parameters to their corresponding optimal solutions), and solutions produced by the Gurobi solver (v10.0.3). For each test system, Algorithm \ref{alg:learning} was used to derive the CF NN model. The generic DNN model was trained and validated on datasets of 5000 and 1000 observations, respectively, and all models were compared on 4000 test observations. 

We used two procedures to generate test datasets, referred to as local perturbations and extreme characteristics, to compare model performance on demand data from inside and outside the DNN training distribution, respectively. The local perturbations data were generated by multiplying base demand data in the Pandapower package with a random uniform number, i.e., $P_{di} = P_{di}^{base} \times Uniform(0.6,1.4) \ \forall i \in \mathcal{N}$. To generate the extreme characteristics data, demand for one bus was increased from 0 to the sum of all generator upper limits, while setting all other loads to 0.01, and repeating this procedure for every bus. For both datasets, solutions were obtained using the Gurobi solver and any infeasible values were labeled.  Experiments and training were run on a Mac Mini M1 (2020) using Python 3.9.16 and PyTorch 2.0.0.post3, with either the default 32-bit floating point precision, which allows for predictions accurate to 7 decimal places, or 64-bit precision, which allows for predictions accurate to 16 decimal places. Model performance was compared based on the KKT violations in Table \ref{tab:kkt}.

\subsection{Test Results}\label{sec:test_results}

Table \ref{tab:MSE_local} reports average KKT violations by our models (CF), DNN with standard training   and Gurobi on the local perturbations test data, with CF average and worst-case (i.e., maximum squared error) results calculated at both 32-bit (CF32) and 64-bit (CF64) representation. Stationarity violations (KKT 1) for the two primal variables are reported separately as $KKT1-\textbf{P}_g$ and $KKT1-\delta$. The CF32 model produced results competitive with Gurobi and calculated solutions with very small error margins. For 6- and 30-bus systems, KKT violations were less than 4.45E-10 for all measures. For the 57-bus system, average violations increased to a maximum of 8.41E-08 and the worst case single violation was 2.42E-05. The CF32 model outperformed the DNN model for all cases and all measures, and even its worst case predictions exceeded average DNN performance in almost all cases. This is remarkable considering that the test dataset was sampled from the same distribution as the DNN training set.

The CF32 model was expected to outperform DNN, but given that it represents the closed-form global solution function exactly, it might also be expected to outperform a numeric solver like Gurobi, which estimates solutions iteratively. To test if performance of the CF32 model was limited by the default 32-bit floating point precision of the NN package (PyTorch), we increased precision to 64-bits and recalculated model predictions. As shown in Table \ref{tab:MSE_local}, average errors decreased from $\leq 8.41$ E-08 for CF32 to $\leq 3.15$E-25 for CF64, substantially outperforming Gurobi, and even the worst-case CF64 violation observed (1.17E-22) exceeded Gurobi's average performance overall. Results for the extreme characteristics test dataset are reported in Appendix \ref{sec:further_results}, showing similar performance differences between the CF, DNN, and Gurobi models. Whereas DNN performance declined significantly on extreme out-of-distribution test data, the CF32 model remained competitive with Gurobi and the CF64 model outperformed Gurobi on most measures. 


Table \ref{tab:results_cost} in Appendix \ref{sec:further_results} compares the optimal costs computed by Gurobi, \(C(\mathbf{P}^g)\), with those estimated by the CF models, \(\hat{C}(\mathbf{P}^g)\). For both local and extreme demands the CF64 model yields lower costs for the majority of predictions, while for the remaining instances, the difference is on the order of \(10^{-8}\). These results, along with the observed KKT violations, demonstrate that our model achieves lower costs without compromising feasibility or optimality. The effects of including line limit constraints are considered in Appendix \ref{sec:line-limits}, with an updated search pattern to accommodate more complex connectivity between critical regions and numerical results for the 6-bus test case.

\begin{table}
	\centering
	\begin{tabular}{lccccccc}
		\toprule
		&	&KKT1-$\textbf{P}_g$ & KKT1-$\bfm\delta$ & KKT2 $(=)$ & KKT2 $(\leq)$ & KKT3 & KKT4\\
		\midrule
		&  \ 6bus   &  1.69E-30 & 1.17E-27 & 5.11E-30 & 5.78E-36 & 0.00E+00 & 3.80E-31 \\
		CF64 &  30bus  & 5.36E-31 & 5.11E-28 & 1.03E-30 & 0.00E+00 & 0.00E+00 & 0.00E+00 \\ 
		&  57bus  &  2.27E-28 & 3.15E-25 & 2.26E-29 & 4.54E-29 & 0.00E+00 & 7.63E-26 \\
		\midrule
		&  \ 6bus  & 1.26E-29 & 2.05E-26 & 3.08E-29 & 1.11E-31 & 0.00E+00 & 1.34E-29 \\ 
		CF64 &  30bus  & 9.66E-30 & 4.79E-26 & 3.18E-29 & 0.00E+00 & 0.00E+00 & 0.00E+00 \\ 
		(worst)&  57bus  & 8.53E-27 & 4.14E-23 & 2.79E-27 & 5.98E-26 & 0.00E+00 & 1.17E-22 \\
		\midrule
		&  \ 6bus  & 1.12E-12 & 4.45E-10 & 1.46E-13 & 5.64E-11 & 0.00E+00 & 5.16E-15 \\ 
		CF32 &  30bus  & 1.78E-13 & 1.13E-10 & 8.14E-14 & 0.00E+00 & 0.00E+00 & 0.00E+00 \\ 
		&  57bus & 6.48E-11 & 8.41E-08 & 2.93E-12 & 1.91E-11 & 0.00E+00 & 2.11E-08 \\ 
		\midrule
		&  \ 6bus  & 5.09E-12 & 3.44E-09 & 2.47E-12 & 1.20E-03 & 0.00E+00 & 5.62E-13 \\ 
		CF32 &  30bus  & 2.79E-12 & 1.03E-08 & 9.88E-12 & 0.00E+00 & 0.00E+00 & 0.00E+00 \\ 
		(worst)&  57bus  & 2.90E-09 & 1.46E-05 & 1.83E-10 & 4.55E-08 & 0.00E+00 & 2.42E-05 \\ 
		\midrule
		& \ 6bus & 2.72E-06 & 7.04E-06 & 2.93E-05 & 9.70E-08 & 1.36E-08 & 1.01E-07 \\
		DNN & 30bus & 4.08E-07 & 1.86E-06 & 5.84E-05 & 0.00E+00 & 2.17E-09 & 6.00E-09 \\
		& 57bus & 2.44E-04 & 5.90E-05 & 5.21E-03 & 1.19E-08 & 3.27E-08 & 2.58E-06 \\
		\midrule
		&  \ 6bus & 1.52E-15 & 9.78E-28 & 2.64E-32 & 0.00E+00 & 0.00E+00 & 2.37E-16 \\
		Gurobi & 30bus & 1.82E-16 & 3.32E-21 & 6.24E-19 & 0.00E+00 & 0.00E+00 & 1.08E-19\\
		&  57bus  & 4.01E-13 & 8.63E-10 & 5.96E-15 & 0.00E+00 & 0.00E+00 &  5.13E-19 \\
		\bottomrule
	\end{tabular}
	\caption{Mean Squared KKT Errors on the test sets with Local Perturbations}
	\label{tab:MSE_local}	
\end{table}


\subsection{Model Performance on Uncertain Inputs}\label{sec:results_speed}

The CF NN model provides significant computational advantages over analytic solvers like Gurobi. To evaluate the performance of our model relative to Gurobi, we created 1000 random sets of input parameters and solved the associated problems for each test system. For the 6-bus system, Gurobi required 14.6 seconds to solve all 1000 instances, which increased to 22.5s and 43.6s for 30- and 57-bus systems, respectively. The corresponding solution times for the CF32 model were 0.003s, 0.004s, and 0.006s, which increased slightly to 0.003s, 0.006s, and 0.010s, respectively, for CF64. Thus, solution times were much longer for Gurobi and increased at a faster rate with increasing system size.
The CF model’s computational efficiency and robust KKT results suggest that it is capable of rapidly producing extensive sets of optimal and feasible solutions, which are valuable for simulating and planning energy systems with uncertain resources like renewable wind generation. To investigate, we used the CF model to simulate energy dispatches on the 57-bus system with uncertain renewable resources over 1-day and 1-year planning horizons. To generate input parameters, we used default loads from the Pandapower package as base demand; historical hourly demand data for Ontario, Canada were then scaled down within the range \{0, 1\} to obtain an hour constant. The demand at each bus was scaled by the hour constant to produce the corresponding hourly input values. We selected May 11, 2024 for 1-day planning and 2024 (January 1st to December 31st) for 1-year planning. 

Additionally, we sampled 500 random renewable generator dispatch values per hour, $\textbf{P}_{ren}$, from an exponential distribution with $\lambda = 1.25$. A maximum generator output of 1.5 units was enforced by capping any generated values exceeding 1.5. The generator dispatch was injected to the test system by modifying the DC-OPF power balance equation (\ref{eq:DC-OPF}b) with a nondispatchable renewable generator as follows:
\begin{equation}
	\textbf{B}\boldsymbol{\delta}-\textbf{P}_g-\textbf{P}_{ren} + \textbf{P}_d=0.
\end{equation}

The results are shown in Table \ref{tab:uncertain_57bus}. The CF32 and CF64 models were able to predict the optimal solutions with very small errors, for one day in 16 and 31 milliseconds, and for one year of data in 6.23 and 14.6 seconds, respectively. Comparable solver-based computation times for Gurobi are estimated to be 8.7 minutes for daily data and 53.2 hours for annual data, respectively, based on the time needed for Gurobi to solve 1000 57-bus problems reported above.  


\begin{table}[!h]
    \centering
    \resizebox{\textwidth}{!}{
    \begin{tabular}{c|c|c|c|c|c|c|c|c|c}
    \toprule
 & & KKT1 ($\textbf{P}_g$)  & KKT1 ($\bfm\delta$)  & KKT2 $(=)$  & KKT2 $(\leq)$  & KKT3  & KKT4  & $\tau$ & $\tau$(Gurobi) \\
 \midrule
\multirow{2}{*}{1 day} & CF64 & 3.74E-28 & 5.37E-25 & 4.67E-29 & 3.36E-28 & 0.00E+00 & 8.20E-25 &  0.031s  &  \multirow{2}{*}{$\approx$8.7 min}  \\
 & CF32  & 1.08E-10 & 1.44E-07 & 3.83E-12 & 4.66E-11 & 0.00E+00 & 8.87E-08 &  0.016s  &   \\
\hline
 \multirow{2}{*}{1 year} & CF64  & 2.07E-28 & 2.86E-25 & 2.38E-29 & 9.90E-20 & 0.00E+00 & 1.18E-26 &  14.6s   & \multirow{2}{*}{$\approx$53.2 hr}   \\
 & CF32  & 5.98E-11 & 8.06E-08 & 2.63E-12 & 1.82E-12 & 0.00E+00 & 2.28E-09 &  6.23s   &    \\
 
 \bottomrule
    \end{tabular}}
    \caption{Performance Comparison of Closed-form and Gurobi on 57-bus system to calculate solutions to long-term planning problem}
	\label{tab:uncertain_57bus}
\end{table}

\section{Conclusion}\label{sec:conclusion}

This paper proposed a NN-based approach to discover the closed-form solution to QP with linear constraints. The method uses algebraic operations to derive all NN model weights from the underlying problem coefficients analytically without requiring training sets. Instead of training to reduce MSE error between model predictions and true values, the CF NN model learns the slopes corresponding to each critical region of the true solution function analytically, and thereby guarantees the optimality and feasibility of solutions for every critical region discovered. 

The proposed model has significant implications. First, the model implements the end-to-end QP solution procedure in an entirely explainable way, by using two networks to sequentially calculate shadow prices of inequality constraints and then construct the rest of the solution. Compared to black box DNN models that have difficulty generalizing outside the training dataset, the CF NN approach generalizes seamlessly to guarantee optimality over all discovered critical regions. Compared to multiparametric methods that apply specific optimizer rules for each CR, the CF NN model reflects the global solution function over all discovered CRs, so it avoids the point-location problem as pre-processing to identify the CR and apply the correct rule is not required. Moreover, because the NN model directly represents the solution function, it can exceed the precision of iterative analytic solvers like Gurobi, at far less computational cost.  

There are limitations to our approach in its current form. First, the search pattern employed by the discovery algorithm does not generalize to all cases, and different patterns were used in the study for each application. In general, our NN modeling approach is agnostic to the search pattern employed, and future work is needed to identify patterns that generalize to a wide variety of cases. Second, our learning algorithm relies on Theorem \ref{th:transition}, which posits that only one constraint either starts or stops binding when moving from one critical region to another, ignoring more complex cases such as degeneracy. Future work will address this limitation to improve generalizability. Third, the proposed algorithms derive model weights using matrix inversion, which can be computationally costly for large systems. Future work will consider more efficient approaches to obtain model weights. Nonetheless, the LvD algorithm has efficiency advantages over existing search methods. As the NN model expands with each discovered critical region, its current state is immediately used to predict solutions in search of the next region, rather than relying on analytic solvers.    

Finally, our study leaves two important classes of QP problems for future research. The problems we solved allowed the RHS of equality constraints to vary but fixed the inequality constraints. Varying inequality constraints result in more complex connectivity graphs, which demand careful examination and potentially updating the learning algorithm. Additionally, we did not consider integer variables, which can result in nonlinear boundaries between critical regions. Future work is needed to address these cases and provide complete parametric solutions to mp-QP problems.

\section*{Acknowledgments}
Sections of this paper are adapted from the author’s PhD dissertation completed at University of Waterloo.

\bibliographystyle{informs2014} 
\bibliography{refs}

\appendix


\section{Additional Background}

\subsection{Proof of Piecewise Linearity of the Solution Function}\label{sec:pwl_proof}
\begin{proof}
	\ Consider the inequality constrained problem in (\ref{eq:ineq}), $ICP$, and a subproblem that does not contain constraint, $k$, $ICP_s$. Let $CR_i, CR_j\subset \Theta_f$ be two arbitrary adjacent critical regions, so that for any $\bfm\theta_i \in CR_i, \bfm\theta_j\in CR_j$, there exists $w$ and $\bfm\theta = w\bfm\theta_i + (1-w)\bfm\theta_j$ such that $\bfm\theta^* \in CR_i \cup CR_j$. Without loss of generality, assume for the set of binding constraints in $CR_i$ and $CR_j$, $\mathcal{B}_j \setminus \mathcal{B}_i = \{k\}$ holds. 

    Consider the following solution functions:
    \begin{align}
		f_i(\bfm\theta) =  
		\textbf{J}^{-1}_{\mathcal{B}_i}
		\begin{bmatrix}
			-\textbf{C} \\ -\textbf{b}_e-\bfm\theta\\ -\textbf{b}_{\mathcal{B}_i}\end{bmatrix}= \begin{bmatrix}
			x(\bfm\theta)\\ \lambda(\bfm\theta) \\ \mu_{\mathcal{B}_i}(\bfm\theta)
		\end{bmatrix},
        \quad
		f_j(\bfm\theta) =  
		\textbf{J}^{-1}_{\mathcal{B}_j}
		\begin{bmatrix}
			-\textbf{C}\\ -\textbf{b}_e-\bfm\theta\\ \textbf{b}_{\mathcal{B}_i} \\ -\textbf{b}_{\{k\}}
		\end{bmatrix}=
        \begin{bmatrix}
			x(\bfm\theta)\\ \lambda(\bfm\theta) \\ \mu_{\mathcal{B}_i}(\bfm\theta) \\ \mu_{\{k\}}^*(\bfm\theta)
		\end{bmatrix}.
        \label{eq:ICP_slope_change}
	\end{align}
    
    We will show that optimal solution function, $f(\bfm\theta) = [\textbf{x}^*,\bfm\lambda^*,\bfm\mu^*]$ for any $\bfm\theta \in CR_i \cup CR_j$ is a PWL function in this subset of the feasible region. 
    
    By definition, $f_{i}(\bfm\theta)$ and $f_{j}(\bfm\theta)$ produce optimal solutions to all $\bfm\theta \in CR_i \cup CR_j$, respectively. Therefore, 
    \begin{equation}
        f(\bfm\theta) = 
        \begin{cases}
            f_i(\bfm\theta), \quad \forall \bfm \theta \in CR_i, \\
            f_j(\bfm\theta), \quad \forall \bfm \theta \in CR_i.
        \end{cases}
        \label{eq:PWLsub}
    \end{equation}
    To prove piecewise linearity of $f$, we need to show that for any $\bfm\theta_i$ and $\bfm\theta_j$, there exists $w^* \in [0,1]$ and $\bfm\theta^* = w^*\bfm\theta_i + (1-w^*)\bfm\theta_j$ such that $f_{i}(\bfm\theta^*) = f_{j}(\bfm\theta^*)$.

    Now, given that $\mu_k$ is known, we can rewrite (\ref{eq:ICP_slope_change}b) as 
    \begin{align}
		f_j(\bfm\theta) &=  
		\textbf{J}^{-1}_{\mathcal{B}_i}
		\begin{bmatrix}
			-\textbf{C} + \textbf{A}_{k}^T\mu_{k}(\bfm\theta) \\ -\textbf{b}_e-\bfm\theta\\ \textbf{b}_{\mathcal{B}_i}\end{bmatrix}=f_i(\bfm\theta) + \begin{bmatrix}
			\textbf{A}_{k}^T\mu_{k}(\bfm\theta) \\ 0\\ 0\end{bmatrix}.
    \end{align}
    Since, $\bfm\mu^*_k \geq 0$, there exists a $\bfm\theta^* \in CR_j$ such that $\mu_k(\bfm\theta^*) = 0$. But for $\mu_k (\bfm\theta^*)=0$, $f_j(\bfm\theta^*) = f_i(\bfm\theta^*)$, hence $\bfm\theta^* \in CR_i$ as well. Since $f_i(\bfm\theta)$ and $f_j(\bfm\theta)$ are linear functions, $f(\bfm\theta)$ in (\ref{eq:PWLsub}) is optimal $\forall\bfm\theta \in CR_i$ and $\forall\bfm\theta \in CR_j$, it is continuous in the two arbitrary adjacent critical regions. Since the choice of adjacent regions is arbitrary, when  $f$ is expanded to cover all critical regions, it will preserve the PWL form.

\end{proof}

\subsection{Neural Networks}\label{sec:NNs}
Neural Networks are universal function approximators that can represent any continuous function, defined as
$f: \mathbf{x} \rightarrow \mathbf{y}$. The simplest form—a shallow feedforward neural network—features an input layer, a hidden layer \( h \), and an output layer. The value (or input current) at any neuron is calculated by multiplying the previous layer's values by a weight matrix, \( \mathbf{W} \). Instead of directly using this input current, the neuron's value is typically modified by an activation function. For example, with a threshold-based activation like ReLU, the neuron activates only if the input current is positive.

\begin{equation}
	ReLU(z_i) = \begin{cases}
		z_i& \text{ if } z_i\geq 0,\\
		0 & \text{ otherwise, }
	\end{cases}
\end{equation}
where $ \sigma(z_i)=ReLU(z_i) $ is an element-wise activation function.

		

The general form of a NN model can be mathematically defined as follows:
\begin{align} \label{eq:BG_deepNN}
	\textbf{h}^0 &= \textbf{x}^T,\notag \\
	\textbf{h}^{i} &= \sigma(\textbf{W}^{i-1T} \textbf{h}^{i-1T}+\textbf{b}^{i-1}), \quad  \text{ for } i \in [1,\dots,n_h],\notag \\
	\textbf{y}  &= \sigma(\textbf{h}^{n_hT}),
\end{align}
where the superscript $T$ denotes the transpose operation and $\textbf{W}^{iT} = (\textbf{W}^{i})^T$ is used to simplify the notation, $ \textbf{x} = [x_1,\dots,x_{d_x}] $, $ \textbf{y} = [y_1,\dots,y_{d_y}] $ are input and output of the model, $ \textbf{h}^i = [h_1^i,\dots,h_{d_h}^i] $ is the calculated values for hidden neurons, $d_x,d_h,d_y$ are dimensions of input, hidden and output layers, $ n_h $ is the number of hidden layers, and $d_d$ is the dataset size (number of observations). Here, output of each consecutive layer is a function of the  previous layer output using weight and bias parameters, $ \textbf{W}^i\in \mathbb{R}^{d_{h} \times d_{h}}$ and $\textbf{b}^i\in \mathbb{R}^{d_h}$, which are optimized during NN training. Note that $d_h$ can differ from one hidden layer to another and denoted with the same term for ease of presentation.

\renewcommand{\theequation}{A.\arabic{equation}}
\setcounter{equation}{0}

\section{Additional Details of Proposed Approach}
\subsection{Derivation of the Solution Function of Inequality Constrained mp-QP}\label{sec:solution}

Assume that the set of binding constraint indices $\mathcal{B}\subseteq \{1,\dots,m_2\}$ is known for an arbitrary parameter set, $\bfm \theta$. Then, the optimal solution to the problem can be found by solving
\begin{subequations}\label{eq:derivativeswithmu}
	\begin{align}
		\frac{\partial L}{\partial \textbf{x}} &= 2\textbf{Q}\textbf{x}+\textbf{C}+\bfm\theta_c - \boldsymbol{\lambda}^T\textbf{A}_e^T-\boldsymbol{\mu}^T\textbf{A}_\mathcal{B}^T = 0\\
		\frac{\partial L}{\partial \boldsymbol \lambda} &= \textbf{b}_e+\bfm\theta_e-\textbf{A}_e\textbf{x} =0,\\
		\frac{\partial L}{\partial \boldsymbol \mu_\mathcal{B}} &= \textbf{b}_{\mathcal{B}}+\bfm\theta_\mathcal{B}-\textbf{A}_{\mathcal{B}}\textbf{x} =0,
	\end{align}
	\label{eq:binding_der}
\end{subequations}
where $\textbf{A}_\mathcal{B}, \textbf{b}_\mathcal{B}$ consist of rows of coefficient matrices, $\textbf{A}_\mathcal{C}, \textbf{b}_\mathcal{C}$, corresponding to coefficient indices in $\mathcal{B} \subseteq \mathcal{C}$, and optimal shadow prices $\mu_i^* > 0 \  \forall i \in \mathcal{B} $ and $\mu_i^* = 0 \  \forall i\notin \mathcal{B} $. 

Note that $\bfm\mu^*$ is a function of the parameter set $\bfm \theta$, which determines the constraints that are active at the solution. 
Given the uncertain parameters, $\bfm\theta_e, \bfm\theta_\mathcal{C}$, our goal is to derive a solution strategy that can calculate optimal solutions for any such parameter. For this purpose, we first assume that the function, $\bfm\mu(\bfm\theta)$ that generates $\bfm\mu^*$, is known. This assumption is later relaxed with the introduction of our proposed NN models. Under this assumption, optimal solutions to (\ref{eq:ineq}) can be  obtained by modifying (\ref{eq:lagr}) and writing the derivatives as follows:
\begin{subequations}
	\begin{align}
		\frac{\partial L}{\partial \textbf{x}} &= 2\textbf{Q}\textbf{x}+\textbf{C}+\bfm\theta_c - \boldsymbol{\lambda}^T\textbf{A}_e^T-\mu^T(\bfm\theta)\textbf{A}_\mathcal{C}^T = 0,\\
		\frac{\partial L}{\partial \boldsymbol \lambda} &= \textbf{b}_e+\bfm\theta_e-\textbf{A}_e\textbf{x} =0.
	\end{align}
	\label{eq:derivatives}
\end{subequations}
The above system of equations can be rearranged in matrix form as follows:
\begin{align}
	\begin{bmatrix}
		2\textbf{Q} & -\textbf{A}_e^T\\
		-\textbf{A}_e & 0
	\end{bmatrix}
	\begin{bmatrix}
		\textbf{x}\\ \boldsymbol{\lambda}
	\end{bmatrix} 
	=
	\begin{bmatrix}
		-\textbf{C}-\bfm\theta_c+\bfm\mu^{T*}\textbf{A}_\mathcal{C}^T \\ -\textbf{b}_e-\bfm\theta_e
	\end{bmatrix},\label{eq:linear}
\end{align}
where $\bfm\mu^*=\mu(\bfm\theta)$. Hence, optimal solutions, $\textbf{x}^*,\bfm\lambda^*$, for any $\bfm\theta\in \Theta_f$ can be calculated as
\begin{align}
	\textbf{J}^{-1}
\begin{bmatrix}
	-\textbf{C}-\bfm\theta_c+\textbf{A}_\mathcal{C}^T\bfm\mu^* \\ -\textbf{b}_e-\bfm\theta_e \qquad 
\end{bmatrix}=\begin{bmatrix}
		\textbf{x}^*\\ \boldsymbol{\lambda}^*
	\end{bmatrix} , \quad \text{ where } \quad
		\textbf{J} = \begin{bmatrix}
				2\textbf{Q} & -\textbf{A}_e^T\\
				-\textbf{A}_e & 0
			\end{bmatrix}.
	\label{eq:linearinverse}
\end{align}

\renewcommand{\theequation}{B.\arabic{equation}}
\setcounter{equation}{0}

\subsection{Learning via Discovery Algorithm} \label{sec:LvDalgorithm}
The learning via discovery algorithm can be found in Algorithm \ref{alg:learning}.
\begin{algorithm}
    \caption{Learning via Discovery}\label{alg:learning}
    \begin{algorithmic}
    \State \textbf{Input:} Search pattern $\mathcal{P} = [\mathcal{D}_1,\dots,\mathcal{D}_n]$, starting point, $\bfm\theta^0$, tolerance $tol$
    \State \textbf{Output:} Learned model parameters
    \State Solve the problem at the initial point $\theta^0$ and find $\mathcal{B}_0$.
    \State Initiate model $f(\cdot)$ with $\textbf{W}^0 = [W^0_{\mathcal{B}_0}], \textbf{W}^1 = [1],\textbf{W}^2 = [1]$ and $\textbf{W}=\textbf{J}^{-1}$
    \For{$\mathcal{D}$ in $\mathcal{P}$}
        \While{$KKT(f(-\textbf{B}-\bfm\theta^i), \bfm\theta^i) \geq tol \quad \forall i \in [1,2,\dots,d]$}
        \State Calculate $f(\bfm\theta^i) \quad \forall i \in [1,2,\dots,d]$
        \State Find smallest $i$ such that $KKT(f(-\textbf{B}-\bfm\theta_i),\bfm\theta_i)> tol$
        \State Find the violated constraint using Remark 1, update $\mathcal{B}_{i+1}$ and $W^0_{i+1}$
        \State $\textbf{W}^0 \gets \textbf{W}^0 + [W^0_{i+1}]$
        \If{$(W^0_{i+1}-W^0_{i})\theta^i>0$}
        \State $(\textbf{W}^1_{ij},\textbf{W}^1_{jj}) \gets (-1,1)$
        \State $\textbf{W}^2_j \gets 1$
        \ElsIf{$(W^0_{i+1}-W^0_{i})\theta^i<0$}
        \State $(\textbf{W}^1_{ij},\textbf{W}^1_{jj}) \gets (1,-1)$
        \State $\textbf{W}^2_j \gets -1$
        \EndIf
        \EndWhile
        \EndFor
        
        \Return $\textbf{W}^0,\textbf{W}^1,\textbf{W}^2$
    \end{algorithmic}
\end{algorithm}

\subsection{Illustration of the Algorithm on a 2D Example}\label{sec:illustration}
We illustrate the functioning of the learning algorithm using the following 2D example from \citet[p. 56]{pistikopoulos2020multi}: 
\begin{subequations}
\begin{align}
    \textbf{Min: } & \sum_{i=1}^4 Q_ix_i^2 + C_ix_i, \\
    \textbf{s.t.:} & \notag \\
    & -x_1-x_3 = \theta_1,\quad -x_2-x_4 = \theta_2,\\
    & x_1+x_2 \leq 350, \quad x_3+x_4 \leq 600,\\
    & x_i \geq 0, \ \bfm\theta_i \in \Theta_f, 
\end{align}
\end{subequations}
where $Q = [156,162,162,126]$ and $C = [25,25,25,25]$ are coefficients of quadratic and linear terms. The feasible set of parameters is $\Theta_f = \{\theta_1,\theta_2\ |\ \theta_1+\theta_2 \leq 1000 \}$.

	Figure \ref{fig:CRs_2D} presents the critical regions of the feasible parameters for this problem geometrically and the corresponding connectivity graph. As shown, there are four critical regions, $CR_1, CR_2,CR_3$, and $CR_4$, defined by the corresponding sets of binding constraints, $\mathcal{B}_1 = \{3,4\}$, $\mathcal{B}_2 = \{1, 3,4\}$, $\mathcal{B}_3 = \{1,3,4,5\}$, and $\mathcal{B}_4 = \{1,3,4,6\}$, respectively. The critical regions 1 and 2, 2 and 3, 2 and 4 are adjacent to each other, respectively.
	\begin{figure}\centering
    \includegraphics[scale=.3]{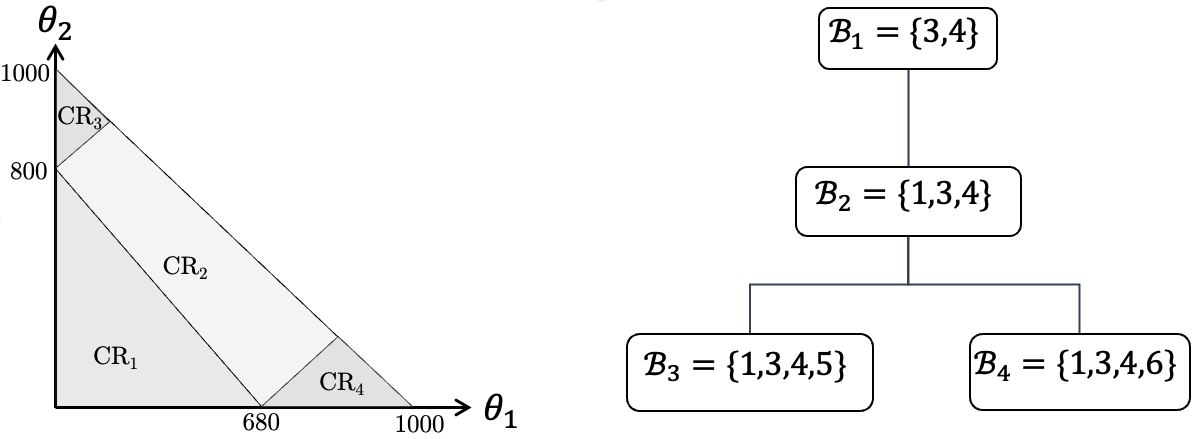}
    \caption{Critical regions and connectivity graph for an mp-QP with two parameters (Adapted from \cite[p. 59]{pistikopoulos2020multi})}
    \label{fig:CRs_2D}
\end{figure}
	
	\begin{figure} \label{fig:algorithm2D}
		\centering
		\begin{minipage}[b]{.5\linewidth}
			\centering
			\includegraphics[width=\linewidth]{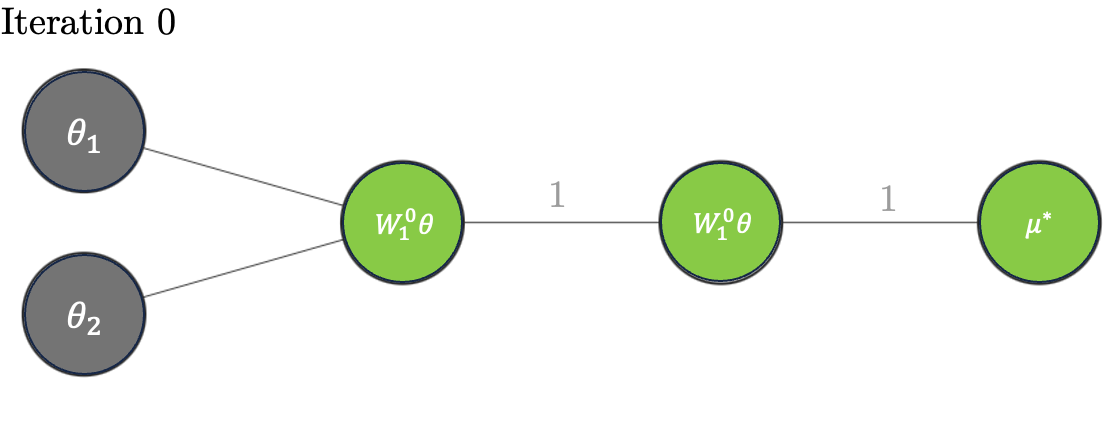}
			\vspace{10pt}
		\end{minipage}
		\begin{minipage}[b]{.3\linewidth}
			\centering
			\includegraphics[width=\linewidth]{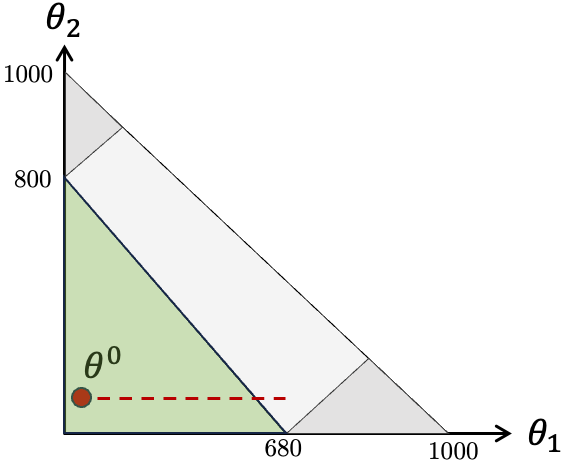}
		\end{minipage}
		
		\begin{minipage}[b]{.5\linewidth}
			\centering
			\includegraphics[width=\linewidth]{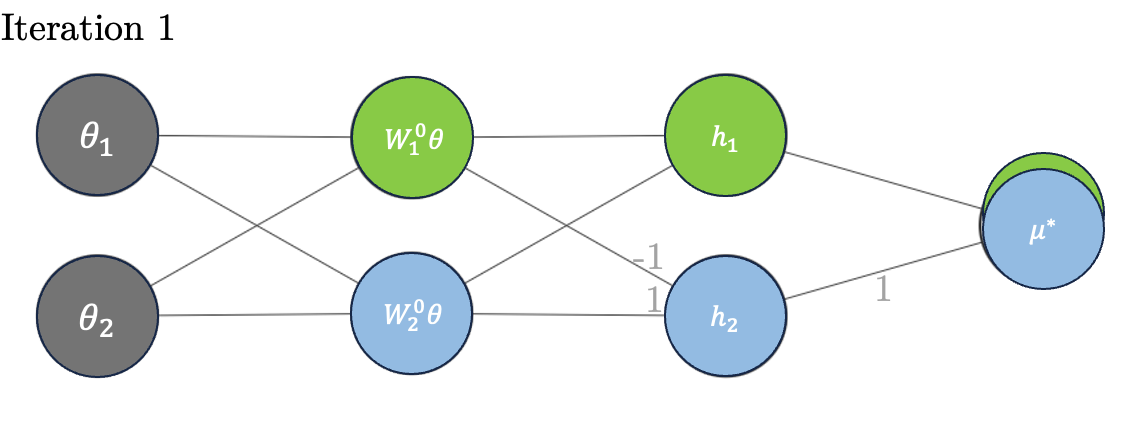}
			\vspace{10pt}
		\end{minipage}
		\begin{minipage}[b]{.3\linewidth}
			\centering
			\includegraphics[width=\linewidth]{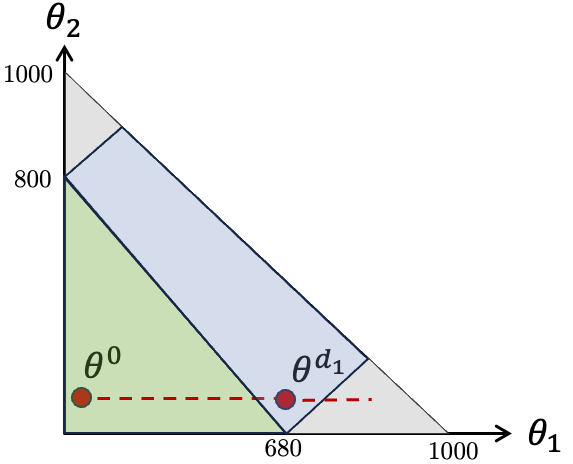}
		\end{minipage}
		
		\begin{minipage}[b]{.5\linewidth}
			\centering
			\includegraphics[width=\linewidth]{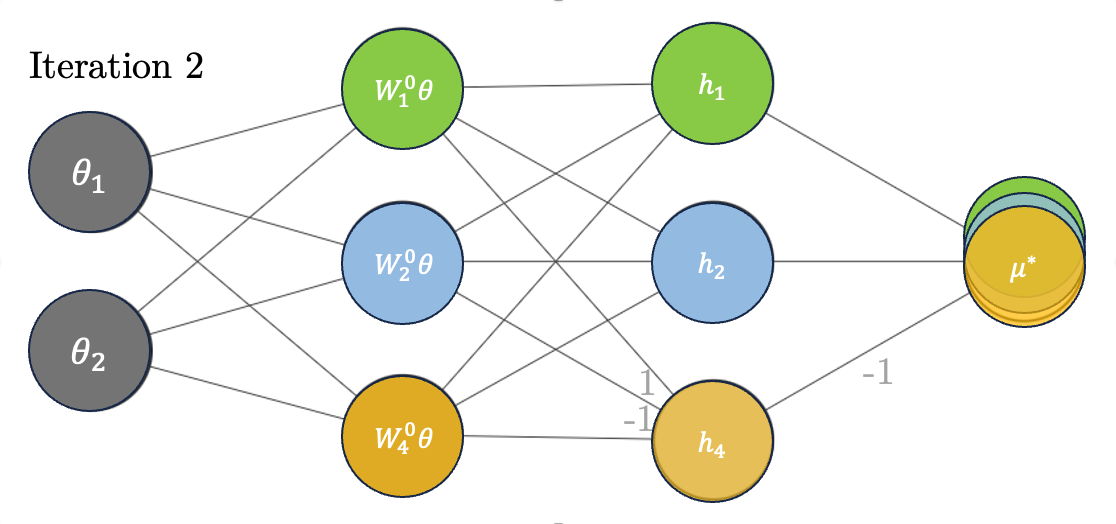}
		\end{minipage}
		\begin{minipage}[b]{.3\linewidth}
			\centering
			\includegraphics[width=\linewidth]{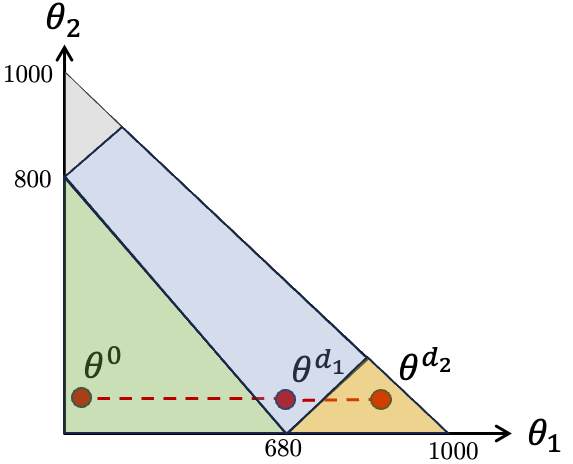}
		\end{minipage}
		
		\begin{minipage}[b]{.5\linewidth}
			\centering
			\includegraphics[width=\linewidth]{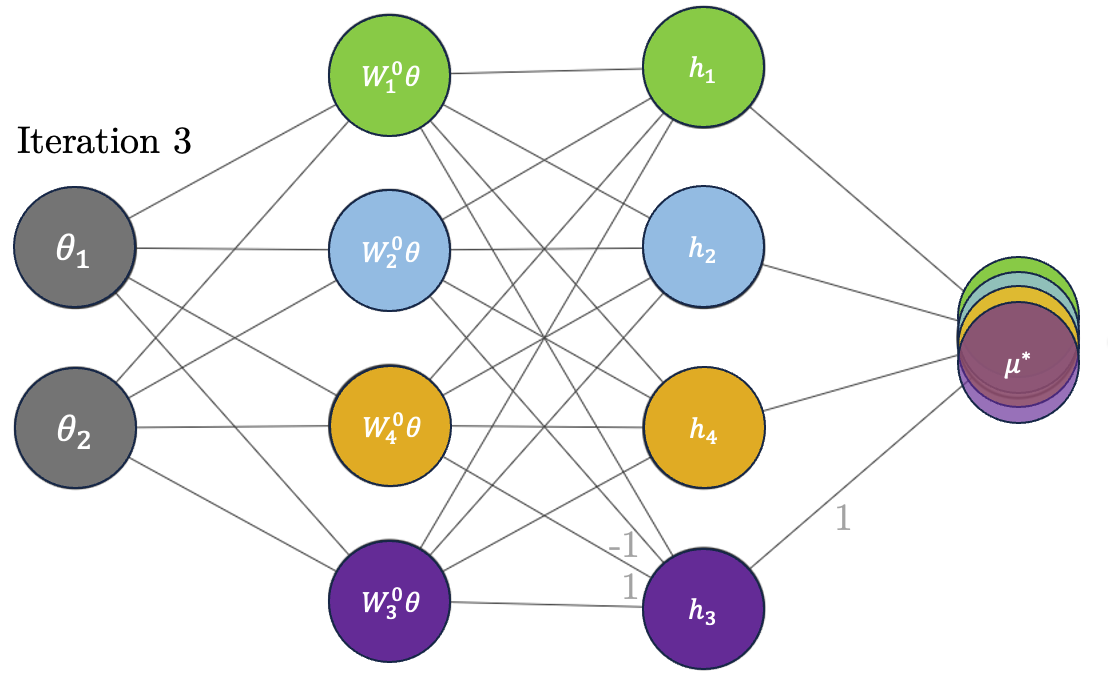}
		\end{minipage}
		\begin{minipage}[b]{.3\linewidth}
			\centering
			\includegraphics[width=\linewidth]{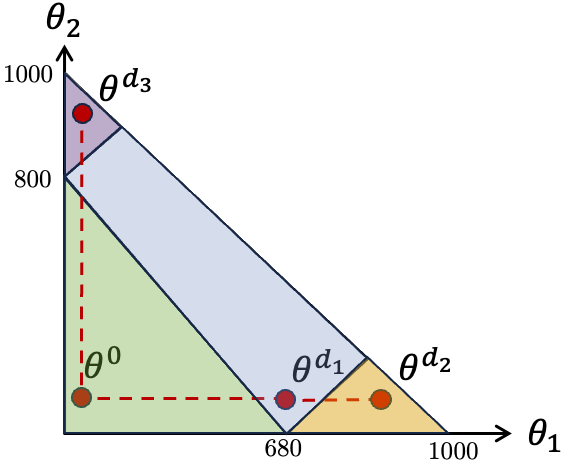}
		\end{minipage}
		
		\caption{Illustration of slope update rules on an example PWL function.}
		\label{fig:CL_algorithm2D}
	\end{figure}
	
	Note that the proposed algorithm does not require information about the critical regions or the connectivity between them prior to the discovery process. As illustrated in Figure \ref{fig:CL_algorithm2D}, the algorithm starts at an arbitrary $\bfm\theta^0$ (iteration 0). As this point is in $CR_1$,  solving the problem at $\bfm\theta^0$ yields $\mathcal{B}_1 = \{3, 4\}$, which is used to calculate $\nabla\bfm\mu_{\mathcal{B}_1}$ and $\textbf{J}^{-1}$ following (\ref{eq:linearinverse_ineq}). Then, the solver model, $f(-\textbf{B} - \bfm\theta;\textbf{W}^0,\textbf{W}^1,\textbf{W}^2,\textbf{W})$ is initialized with $\textbf{W}^0 = [\nabla\bfm\mu_{\mathcal{B}_1}], \textbf{W}^1 = [1], \textbf{W}^2 = 1$, and $\textbf{W} = \textbf{J}^{-1}$. 
	
	As seen in Figure \ref{fig:CL_algorithm2D}, at iteration 0, this initial model can already predict the optimal solutions $\forall \bfm\theta \in CR_1$. This initial model is used to predict solutions for all data points sampled in the chosen search direction. However, as $\bfm{\theta}$ is moved outside the $CR_1$ towards $\bfm{\theta}^{d_1}$, the model starts violating KKT conditions due to the changing set of binding constraints in the new critical region. Then, changes in the set of binding constraints can be identified using Remark 1. Specifically, in the example, the model violates Constraint 1, which can be identified from the positive value returned from $\textbf{b}_m+\bfm\theta_m-\textbf{A}x(\bfm\theta)$. Therefore, in the next iteration, the model is expanded with the slope of the new critical region, $\nabla \bfm\mu_{\mathcal{B}_2}$, and the first layer parameters are updated as $\textbf{W}^0 = [\nabla \bfm\mu_{\mathcal{B}_1}, \nabla \bfm\mu_{\mathcal{B}_2}] $. To decide the direction of the increment, one can check 
	\begin{equation}
		\nabla \bfm\mu_{\mathcal{B}_2} (-\textbf{B} - \bfm\theta^{d_1}) > \nabla \bfm\mu_{\mathcal{B}_1} (-\textbf{B} -  \bfm\theta^{d_1}).
	\end{equation}	
	As the above statement holds in our example, we expand $\textbf{W}^1=\mathcal{I}$ and $\textbf{W}^2 = v$ with zeros and set $ (\mathcal{I}_{12}, \mathcal{I}_{22}) = (-1,1), v_{2} = 1$.	 
	
	As illustrated in Figure \ref{fig:CL_algorithm2D} - Iteration 1, the updated model can now predict optimal solutions to $\forall \bfm\theta \in CR_1\cup CR_2 $ and is used to predict solutions for all data points sampled in the search direction. This procedure is repeated in Iteration 2 and the resulting model satisfies KKT conditions for all $\bfm \theta$ in the first search direction.
	
	In the next step, another direction is chosen and the process is repeated. When $\bfm\theta$ increases from $\bfm\theta^0$ to $\bfm\theta^{d_3}$, the model predicts optimal solutions to all points except $\bfm\theta\in CR_3$, where the new slope is added and the model now predicts the optimal solution for all points. After this step, the final model becomes,
	\begin{align*}
		f_\mu(\textbf{B}_{\bfm\theta}) &= \textbf{W}^{2T}\sigma(\textbf{W}^{1T}\sigma(\textbf{W}^{0T}(-\textbf{B}-\bfm\theta))) \\
		&=[ \textbf{W}_1^{0T} \textbf{B}_{\bfm\theta} ]^+  + [(\textbf{W}_2^0 - \textbf{W}_1^0)^T \textbf{B}_{\bfm\theta}]^+ +[(\textbf{W}_3^0-\textbf{W}_2^0)\textbf{B}_{\bfm\theta}]^+ -[(\textbf{W}_2^0 - \textbf{W}_4^0) \textbf{B}_{\bfm\theta}]^+,
	\end{align*}
	where $\textbf{B}_{\bfm\theta} = -\textbf{B} - \bfm\theta $.

\renewcommand{\theequation}{C.\arabic{equation}}
\setcounter{equation}{0}

\section{Further Results} 
\subsection{Results with Extreme Demand Characteristics} \label{sec:further_results}

Table \ref{tab:MSE_extreme} reports the average squared KKT violations for all three modeling approaches as well as the worst case violations seen in the dataset by our approach on the dataset prepared for extreme characteristics. The results for CF NN approach does not change overall from the previous case and outperform DNN in all cases. However, DNN models performance reduces significantly, generating 1E+5 to 1E+6 times more violation for all measures overall. For example, KKT1-$\textbf{P}_g$ violations calculated for 6-bus reduces from 2.72E-06 to 7.15E-03 for 6-bus system and from 2.44E-04 to 7.41E+01 for 57-bus system. Finally, as in the previous case,  CF64 model largely outperforms all other models.

\begin{table}
	\centering
	\begin{tabular}{lccccccc}
		\toprule
		&&KKT1-$\textbf{P}_g$ & KKT1-$\bfm\delta$ & KKT2 $(=)$ & KKT2 $(\leq)$ & KKT3 & KKT4\\
		\midrule
		& \ 6bus  &  1.83E-30 & 1.59E-27 & 5.96E-30 & 0.00E+00 & 0.00E+00 & 2.43E-31 \\
		CF64 &  30bus  & 8.81E-31 & 1.04E-27 & 1.36E-30 & 6.83E-32 & 0.00E+00 & 4.83E-32 \\
		&  57bus  & 2.27E-28 & 3.15E-25 & 2.26E-29 & 4.54E-29 & 0.00E+00 & 7.63E-26 \\
		\midrule
		&  \ 6bus  & 1.26E-29 & 1.19E-26 & 5.05E-29 & 1.36E-30 & 0.00E+00 & 2.27E-29 \\
		CF64  &  30bus  & 1.26E-29 & 6.06E-26 & 1.28E-28 & 4.55E-30 & 0.00E+00 & 6.65E-30 \\
		(worst)&  57bus  & 8.53E-27 & 4.14E-23 & 2.79E-27 & 5.98E-26 & 0.00E+00 & 1.17E-22 \\
		\midrule
		&  \ 6bus  & 1.12E-12 & 4.45E-10 & 1.46E-13 & 5.64E-11 & 0.00E+00 & 5.16E-15 \\ 
		CF32 &  30bus  & 1.77E-13 & 1.62E-10 & 1.96E-13 & 3.08E-16 & 0.00E+00 & 1.67E-16 \\ 
		&  57bus  & 1.94E-11 & 4.30E-08 & 3.11E-12 & 8.77E-09 & 0.00E+00 & 1.11E-08 \\ 
		\midrule
		&  \ 6bus  & 5.50E-12 & 3.77E-09 & 2.80E-12 & 1.29E-06 & 0.00E+00 & 6.96E-13 \\ 
		CF32  &  30bus  & 3.64E-12 & 2.35E-08 & 2.19E-11 & 8.88E-14 & 0.00E+00 & 7.06E-14 \\ 
		(worst)&  57bus  & 1.89E-09 & 3.16E-06 & 9.96E-10 & 4.86E-04 & 0.00E+00 & 1.71E-05 \\ 
		\midrule
		& \ 6bus & 7.15E-03 & 8.38E-01 & 3.26E+00 & 1.66E-02 & 5.89E-02 & 1.48E-03 \\
		DNN & 30bus & 1.66E-02 & 1.54E-02 & 3.82E-02 & 4.76E-05 & 8.06E-07 & 2.32E-07 \\
		& 57bus & 7.41E+01 & 8.50E+01 & 3.28E+01 & 9.75E-04 & 4.03E-02 & 1.24E+00 \\
		\midrule
		& \ 6bus & 9.13E-15 & 1.03E-27 & 1.83E-31 & 0.00E+00 & 0.00E+00 & 3.62E-16 \\
		Gurobi 	& 30bus & 4.17E-15 & 3.36E-21 & 6.06E-17 & 0.00E+00 & 0.00E+00 & 9.67E-17 \\
		  &  57bus  & 7.88E-13 & 1.36E-09 & 1.76E-14 & 0.00E+00 & 0.00E+00 & 1.30E-15 \\ 
		\bottomrule
	\end{tabular}
	\caption{Mean Squared KKT Errors on the test set with Extreme Characteristics}
	\label{tab:MSE_extreme}	
\end{table}

\begin{table}[]
     \centering
     \begin{tabular}{cc|ccc|ccc}
     \toprule
 &  &  & Local &  &  & Extreme &  \\
 &  & 25\% & 50\% & 75\% &  25\% & 50\% & 75\% \\
 \midrule
 &  6bus  & 5.46E-09 & 1.02E-08 & 3.09E-08 & -1.42E-14 & 7.32E-10 & 8.74E-09 \\
CF64  &  30bus  & -7.92E-09 & 1.60E-10 & 1.20E-08 & -7.92E-09 & 1.60E-10 & 1.20E-08 \\
 &  57bus  & -1.51E-08 & 1.05E-07 & 7.31E-07 & -5.67E-09 & 1.01E-08 & 1.69E-07 \\
 \midrule
 &  6bus  & 2.97E-07 & 2.13E-06 & 3.92E-06 & 1.38E-06 & 2.62E-06 & 3.98E-06 \\
CF32  &  30bus  & -3.91E-08 & 1.69E-07 & 3.59E-07 & -8.08E-03 & -7.12E-03 & -6.01E-03 \\
 &  57bus  & -4.70E-05 & -1.11E-05 & 2.42E-05 & -2.79E-01 & -2.33E-01 & -1.83E-01 \\
 \bottomrule
     \end{tabular}
\caption{$C(\textbf{P}^g) - \hat C(\textbf{P}^g)$ calculated for the local perturbations and extreme characteristics datasets}
    \label{tab:results_cost}
\end{table}

\subsection{Line Limits}\label{sec:line-limits}
The results in Section \ref{sec:results} and \ref{sec:results_speed} show that our CF model and the proposed learning-via-discovery algorithm can discover the solution function for the problem in (\ref{eq:DC-OPF}). However, the problem in (\ref{eq:DC-OPF}) ignores line limits, which can affect the problem structure 
\begin{subequations}
    \begin{align}
        \min\ &\textbf{P}^{T}\textbf{Q}\textbf{P} + \textbf{C}^T\textbf{P} + \textbf{C}_0,\\
        \text{s.t.: } & \textbf{P}_d +\bfm\theta_e-\textbf{P} -\textbf{B}\bfm\delta = 0,\\
        &\textbf{P}^-\leq \textbf{P} \leq \textbf{P}^+,\\
        -&\textbf{F}^+\leq \mathcal{H}\textbf{B}\bfm\delta \leq \textbf{F}^+,
    \end{align}
    \label{eq:CL_DC-OPF_wF}
\end{subequations}
When the line limits are ignored, the connectivity graph between adjacent critical regions is a simple graph that can be discovered with a simple search algorithm. However, when the line limits are included, we need a better search algorithm. Figure \ref{fig:criticalRegions}a illustrates critical regions similar to the test cases in Section \ref{sec:results}, where each region with a different shade is a distinct critical region. It can be seen that, when the discovery algorithm starts at $\bfm\theta^0$ and $\bfm\theta$ is increased along the $\theta_1$ axis until the last feasible point, the algorithm visits all critical regions. When moving from $\bfm\theta^0$ along the $\theta_2$-axis, the same critical regions are discovered. 

\begin{figure}
    \centering
    \begin{minipage}{.5\textwidth}
        \centering
        \includegraphics[width=.8\linewidth]{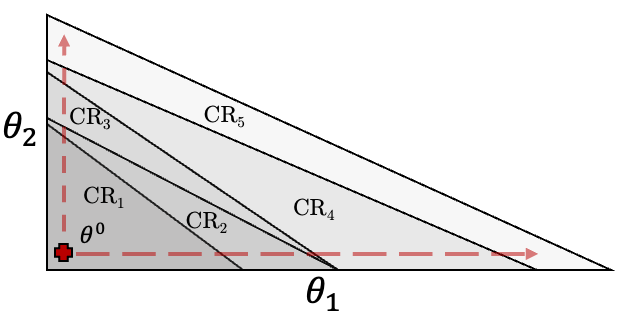}
    \end{minipage}%
    \begin{minipage}{.5\textwidth}
        \centering
        \includegraphics[width=.8\linewidth]{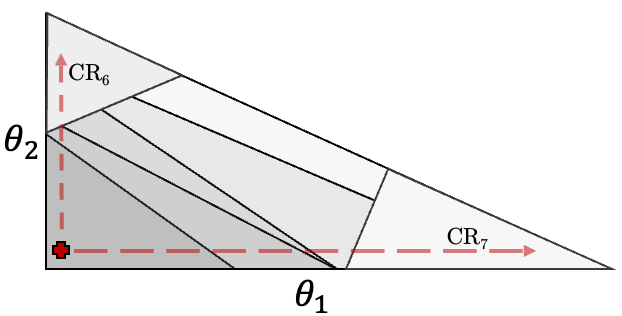}
    \end{minipage}
    \caption{Connectivity of critical regions for problems without (left) and with (right) line limits}
    \label{fig:criticalRegions}
\end{figure}

When line limits are added as constraints, as in \ref{eq:DC-OPF}d, increasing each parameter, $\theta_i$, can trigger a different set of binding constraints, as illustrated in Figure \ref{fig:criticalRegions}b. By increasing $\bfm\theta$ from $\bfm\theta^0$ along the $\theta_1$ axis, the discovery algorithm would discover $CR_1,CR_2, CR_3$, and $CR_6$, respectively, whereas increasing $\bfm\theta$ along $\theta_2$ axis would lead to the discovery of  $CR_1,CR_2$, and $CR_5$ respectively. Notice that in both of these cases, the search algorithm would miss $CR_4$. Moreover, changing the starting point from $\bfm\theta^0$ to another point such as $\bfm\theta'$ and moving along both axes does not guarantee discovering all critical regions.

\begin{figure}
    \centering
    \begin{minipage}{.5\textwidth}
        \centering
        \includegraphics[width=.8\linewidth]{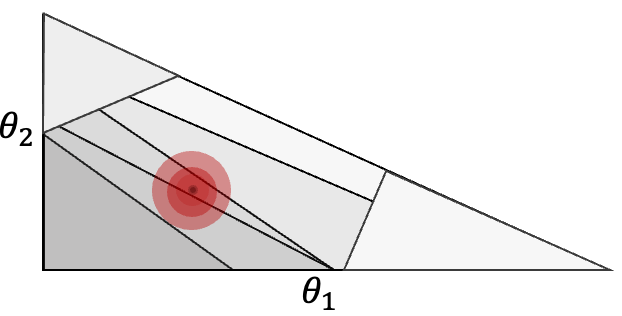}
    \end{minipage}%
    \begin{minipage}{.5\textwidth}
        \centering
        \includegraphics[width=.8\linewidth]{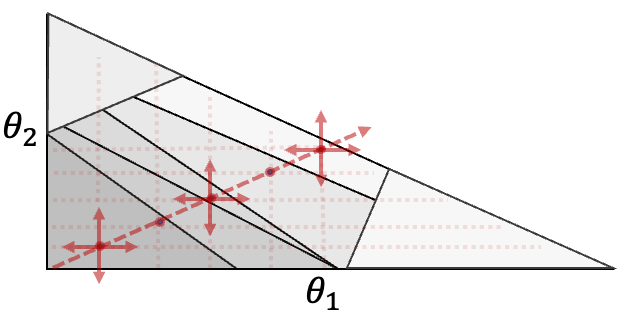}
    \end{minipage}
    \caption{ Local sampling vs. feasible region (left) and updated search algorithm (right) }
    \label{fig:searchAlgo}
\end{figure}

In our preliminary experiments, we sampled a dataset by perturbing a base parameter, $P_{base}^d$ with a multiplier sampled from a distribution, which is the method we used to generate the dataset for Table \ref{tab:MSE_local}. When the discovery algorithm was run by setting $\bfm\theta_e = \textbf{P}_{base}^d$ and moving each $\theta_i$ from 0 to the maximum feasible number, our model could predict the optimal solutions for every test point. However, when $\textbf{P}_{base}$ was changed, our model failed to calculate optimal solutions due to new critical regions that were not discovered by the algorithm.

The plot on the left side of Figure \ref{fig:searchAlgo} illustrates the distribution of input parameters obtained by applying local perturbations and the entire feasible region of the parameters. As shown in the figure, sampling data by applying local perturbations to a base load generates datapoints from three critical regions only. Therefore, this approach does not result in a model that can guarantee solutions outside these three regions. We updated our search algorithm to generate input parameters by moving around a changing  base load, as illustrated on the right side of Figure \ref{fig:searchAlgo}. The updated search algorithm starts with a small value of $k$ and parameters are generated along each axis, starting from the base point until the end of feasible region. The learning-via-discovery algorithm is run to discover critical regions, and to build the model for these regions. Then, $k$ is incrementally increased and the search and discovery steps are repeated.


To test our model, we also updated the test data generation algorithm in a similar fashion to the search algorithm, which is given in Figure \ref{fig:criticalRegions}. The algorithm starts with a base parameter set, $\bfm\theta$, and the set of scaling factors, $[k_1,\dots,k_n]$. With each value of $k$, $\bfm\theta_{base}$ is updated with each value of $k$ and a new parameter set is generated by multiplying the updated base parameter set with a random multiplier, $r_j$, sampled from a Uniform distribution.


\subsection{Test Results with Line Limits}
Our proposed algorithm was run on the 6-bus system to solve the problem in \ref{eq:CL_DC-OPF_wF}. The line limits for each line were set to 200 mW and the base load was set to 140 mW for all load buses. For the test set, we scaled the base load with scaling factors from $k \in [1,1.125,\dots,2]$ as $\bfm\theta^k_{base} = \bfm\theta_{base}\cdot k$ and applied local perturbations same as in earlier sections. We generated 10,000 input parameters for each value of $k$ by multiplying a factor, $r_j \sim Unif(0.6,1.4)$$r_j$ with a modified base load, i.e. $\bfm\theta = r_j \cdot \bfm\theta^k_{base}$. We removed infeasible inputs after solving  each input case using the Gurobi solver. The resulting number of data points are presented in Table \ref{tab:CF_results_wF} under the \# column.

Table \ref{tab:CF_results_wF} presents the results. As seen in the table, for all values of $k\leq 1.625$, the model satisfies KKT optimality conditions with violations less than 5E-10 MSE for all measures. For $k=[1.75,1.875, 2]$, violation errors begin to increase for KKT2 $(\leq)$ and KKT4. An analysis of the cases where our model under-performs indicated that these input parameters were located in undiscovered critical regions close to the boundaries of the feasible region. When these input parameters are removed from the test set, the KKT violations reduced to levels similar to those observed for $k\leq 1.625$.

\begin{table}[]
    \centering
    \begin{tabular}{l|c|c|c|c|c|c|c}
    \toprule
        $k$ & \#  & KKT1 ($\textbf{P}^g$)  & KKT1 ($\boldsymbol\delta$)  & KKT2 $(=)$  & KKT2 ($\leq$) & KKT3 & KKT4 \\
        \midrule
        1.0  & 9924  & 5.40E-13 & 1.63E-10 & 1.98E-13 & 4.91E-15 & 0 & 6.04E-15 \\ 
        1.125 & 10000 & 5.78E-13 & 1.71E-10 & 1.82E-13 & 4.24E-15 & 0 & 3.10E-15 \\ 
        1.25  & 10000 & 6.51E-13 & 1.86E-10 & 1.51E-13 & 2.85E-15 & 0 & 1.27E-15 \\ 
        1.375 & 10000 & 7.12E-13 & 2.01E-10 & 1.23E-13 & 1.70E-15 & 0 & 4.84E-16 \\ 
        1.5 & 9998 & 7.56E-13 & 2.16E-10 & 1.02E-13 & 7.80E-16 & 0 & 2.17E-16 \\ 
        1.625 & 9755 & 8.06E-13 & 2.29E-10 & 9.46E-14 & 3.92E-16 & 0 & 6.59E-16 \\ 
        1.75  & 8797 & 8.16E-13 & 2.33E-10 & 9.37E-14 & 9.77E-11 & 0 & 1.24E-10 \\ 
        1.875 & 7196 & 8.28E-13 & 2.40E-10 & 9.61E-14 & 1.63E-08 & 0 & 5.46E-09 \\ 
        2.0  & 5372 & 8.31E-13 & 2.42E-10 & 1.03E-13 & 2.85E-07 & 0 & 1.06E-07 \\ 
        \midrule
        $1.75^*$ & 8792 & 8.16E-13 & 2.33E-10 & 9.36E-14 & 1.81E-16 & 0 & 1.67E-15 \\ 
        $1.875^*$ & 7170 & 8.28E-13 & 2.40E-10 & 9.53E-14 & 6.81E-14 & 0 & 4.13E-13 \\ 
        $2.0^*$  & 5331 & 8.31E-13 & 2.41E-10 & 1.02E-13 & 5.06E-14 & 0 & 1.35E-12 \\ 
        \bottomrule
        \end{tabular}
    \caption{CF32 Mean Squared KKT Errors on the 6-bus test set with line limits}
    \label{tab:CF_results_wF}
\end{table}


\end{document}